\theoremstyle{thmstyletwo}%
\newtheorem{theorem}{Theorem}%  meant for continuous numbers
\numberwithin{equation}{section}
\newcommand{\EE}{\mathbb{E}}
\newcommand{\RR}{\mathbb{R}}
\newtheorem{lemma}[theorem]{Lemma}
\newtheorem{coro}[theorem]{Corollary}
\newtheorem{assp}[theorem]{Assumption}
\newtheorem{expl}[theorem]{Example}
\newtheorem{rmk}[theorem]{Remark}
\def\a{\alpha}\def\o{\theta} \def\d{\delta}
\def\e{\varepsilon}   \def\s{\sigma}
\def\D{\Delta} \def\k{\kappa}
\title{The semi-implicit Euler-Maruyama method for nonlinear non-autonomous stochastic differential equations driven by a class of L\'evy processes}
\author{
 Xiaotong Li, Wei Liu, Hongjiong Tian \\
 Department of Mathematics, Shanghai Normal University, Shanghai, 200234, China\\
 \texttt{x.t.li@foxmail.com; weiliu@shnu.edu.cn; hjtian@shnu.edu.cn}
}
\begin{document}
\maketitle

\begin{abstract}
The strong convergence of the semi-implicit Euler-Maruyama (EM) method for stochastic differential equations with non-linear coefficients driven by a class of L\'evy processes is investigated. The dependence of the convergence order of the numerical scheme on the parameters of the class of L\'evy processes is discovered, which is different from existing results. In addition, the existence and uniqueness of numerical invariant measure of the semi-implicit EM method is studied and its convergence to the underlying invariant measure is also proved. Numerical examples are provided to confirm our theoretical results.
\end{abstract}

% keywords can be removed
\keywords{L\'evy process \and stochastic differential equation \and semi-implicit Euler-Maruyama method \and strong convergence \and invariant measure}

\section{Introduction}\label{Intro}
L\'evy processes and stochastic differential equations (SDEs) driven by L\'evy processes have been widely employed to model uncertain phenomena in various areas \cite{AkgirayBooth1988,BardhanChao1993,ContTankov2003,ContVoltchkova2004,Mandelbrot1997}.
%\par Since the explicit forms of the solutions to most SDEs are hardly found, numerical methods are inevitable in applications.
%\par
%Their characteristics may differ significantly for various classes of L\'evy processes \cite{D.Applebaum2009,Sato2013}. When analyzing the numerical approaches for SDEs driven by different L\'evy process types, special consideration must be given to the various properties.
Since different classes of L\'evy processes may have distinct properties \cite{D.Applebaum2009,Sato2013}, much attention should be paid for characteristics of different types of  L\'evy processes,  when  analyzing numerical methods for SDEs driven by L\'evy processes is done.
\par
Numerical methods for SDEs driven different classes of L\'evy processes were proposed and studied. Higham and Kloeden \cite{HighamKloeden2007} studied the semi-implicit Euler-Maruyama (EM) method for solving SDEs with Poisson-driven jumps and obtained its optimal strong convergence order. More recently, the tamed Euler approach \cite{DareiotisKumarSabanis2016}, the tamed Milstein method \cite{KumarSabanis2017}, general one-step methods \cite{ChenGanWang2019}, and  the truncated EM method \cite{Zhang2021} were proposed for solving SDEs driven by L\'evy processes with super-linearly growing coefficients. All the works mentioned above focused on handling the super-linearity of the coefficients but not cared about the effects of the types of  L\'evy processes on the convergence order.
\par
On the contrary, different types of stable processes do lead to different convergence orders. Oliver and Dai \cite{OliverDai2017} investigated the strong convergence order of the EM method for SDEs with H\"older continuous coefficients in both time and spatial variables and driven by a truncated symmetric $\a$-stable process. Mikulevicius and Xu \cite{MikuleviciusXu2018} considered the convergence order of the EM approximation for SDEs driven by an $\a$-stable process  with a Lipschitz continuous coefficient and a H\"older drift. Huang and Liao \cite{HuangLiao2018} and  Huang and Yang \cite{HuangYang2021} extended  the EM method to stochastic functional differential equations  and distribution-dependent SDEs with H\"older drift and $\a$-stable process, respectively.  K\"uhn and Schilling \cite{KuhnSchilling2019} established strong convergence of the  EM method for a class of L\'evy process-driven SDEs. Although those papers \cite{OliverDai2017,MikuleviciusXu2018,HuangLiao2018,HuangYang2021,KuhnSchilling2019} discovered the dependence of the convergence orders on types of stable processes, they focused on the classical Euler type method, which ruled out the cases of super-linear coefficients \cite{HutzenthalerJentzenKloeden2011}.
\par
To our best knowledge, few existing works focused on both the super-linearity of the coefficients and the effects of the parameters of the class of L\'evy processes on the convergence order.
\par
In this paper, we focus on the class of L\'evy processes with the triplet $(0,0,\nu)$ satisfying $\int_{|z|< 1}|z|^{\gamma_0} \nu(dz)<\infty$ with $\gamma_0\in[1,2]$  and $\int_{|z|\geq 1}|z|^{\gamma_{\infty}} \nu(dz)<\infty$ with $\gamma_{\infty}>1$ (see Section \ref{Pre} for more details). And we derive the strong convergence order of the semi-implicit EM method that is dependent on $\gamma_0$. Such setting on the triplet of the L\'evy processes covers many interesting stable processes, such as the Lamperti stable process \cite{CPP2010} and the tempered stable process \cite{KT2013}. It is worth noting that the class of L\'evy processes we are referring to here does not include $\a$-stable, and there are moments of any order. The fundamental reason why this class of L\'evy process does not include $\a$-stable is that it does not have second moments, and in this case, only small pth moments ($p\in(0,\a)$) can be considered. Compared with existing results \cite{KuhnSchilling2019,OliverDai2017}, the SDEs studied in this work are allowed to have super-linear coefficients. Therefore, the EM method used in \cite{KuhnSchilling2019,OliverDai2017} may fail to convergence, due to the combined effects of the unbounded noise and super-linear terms in the coefficients \cite{HutzenthalerJentzenKloeden2011}.
\par
The invariant measure of SDEs with different classes of L\'evy processes is also essential for many stochastic models \cite{BYY2016,TJZ2018,Wang2013}. However, the explicit expression of the invariant measure is rarely obtained. Therefore, deriving the numerical invariant measure that is convergent to the underlying one is crucial in the applications of those models. In this paper, the existence and uniqueness of the numerical invariant measure of the semi-implicit EM method is proved and the convergence of the numerical invariant measure to the underlying one is verified. We should mention that there are many existing works on the numerical invariant measure of SDEs driven by  Brownian motion \cite{BaoShaoYuan2016,JiangWengLiu2020,LiuMao2015,LiMaoYin2019,Talay2002}, but few works are devoted to the case of the L\'evy processes.

\par
The main contributions of this paper are as follows.
\begin{itemize}
\item The strong convergence in the finite time of the semi-implicit EM method for SDEs driven by a class of L\'evy process is established and the dependence of the convergence order on the H\"older continuity and the L\'evy process is revealed.
\item The existence and uniqueness of the numerical invariant measure of the semi-implicit EM method is proved. Moreover, the convergence of the numerical invariant measure to the underlying one is obtained.
\end{itemize}
\par
This paper is organized as follows. In Section \ref{Pre},  some preliminaries are briefly introduced. In Section \ref{Main1}, we establish the finite-time strong convergence of the semi-implicit EM method.  The convergence of the  numerical invariant measure  is studied in Section \ref{Main2}. Numerical examples are  given in Section \ref{Numex} to illustrate our theoretical results and some concluding remarks are included in Section \ref{Con}.
\section{Preliminaries}\label{Pre}
Let $(\Omega, \mathcal{F},\mathbb{P})$ be a complete probability space with a filtration $\{\mathcal{F}_t\}_{t\geq 0}$ satisfying the usual conditions (i.e.,  it is right continuous and increasing in $t$ while $\mathcal{F}_0$ contains all $\mathbb{P}$-null sets). Let $|\cdot |$ denote the Euclidean norm in $\RR^d$. Let $B(t)$ be the $m$-dimensional Brownian motion defined on the probability space.
\par
A stochastic process $L=\{L(t), t\geq 0\}$ is called a $m$-dimensional L\'evy process if
\begin{itemize}
  \item $L(0)=0$ (a.s.);
  \item For any integer $n\geq 1$ and $0\leq t_0<t_1<\cdots <t_n\leq T$, the random variables $L(t_0), L(t_1)-L(t_0), \cdots , L(t_n)-L(t_{n-1})$  have independent and stationary increments;
  \item $L$ is stochastically continuous, i.e. for all $\epsilon>0$ and all $s\geq 0$,
  \begin{equation*}
    \lim_{t\rightarrow s}\mathbb{P}(|L(t)-L(s)|>\epsilon)=0.
  \end{equation*}
\end{itemize}
\par
The L\'evy-Khintchine formula \cite{D.Applebaum2009} for  the L\'evy process $L$ is
\begin{align*}
\varphi_L(\o)(t):=& \EE[e^{i\o \cdot L(t)}]\\
=& \exp\left(t(ib\cdot\o-\frac{1}{2}\o\cdot A\o+\int_{\RR^d \setminus\{0\}}(e^{i\o\cdot z}-1-i\o \cdot x\mathrm{1}_{|z|<1})\nu(dz))\right),
\end{align*}
where $b\in\RR^d$, $A\in \mathbb{R}^{d\times d}$ is a positive definite symmetric  matrix and the L\'evy measure $\nu$ is a $\s$-finite measure such that $\int_{\RR^d\setminus \{0\}}\min(1,x^2)\nu(dx)<\infty$. The L\'evy triplet $(b, A, \nu)$ is the characteristics of  the infinitely divisible random variable $L(1)$, which has a one-to-one correspondence with the L\'evy-Khintchine formula.

%The L\'evy measure $\nu$ cannot be calculated explicitly. The existence of the moments $\int_{|z|\leq 1}|z|^{\gamma_0}\nu(dz)$ and $\int_{|z|\geq 1}|z|^{\gamma_{\infty}}\nu(dz)$ is related to the growth of the characteristic exponent $\psi$ ($\EE e^{i\o\cdot L_t}=e^{-t\psi(\o)}$). We note that Lemma 5.1 in \cite{KuhnSchilling2019}, which helps us to verify the assumptions of main results. It reveals how the growth of the characteristic exponent at 0 (resp., at infinity) is related to the existence of the L\'evy measure at infinity (resp., at 0).

%(For details, see Notes (4) in \cite{D.Applebaum2009}, pp.31).

In this paper, we consider a SDE driven by the multiplicative Brownian motion and the additive L\'evy process of the form
\begin{equation}\label{sde}
y(t)=y(0)+\int_{0}^{t}f(s,y(s))ds+\int_{0}^{t}g(s,y(s))dB(s)+\int_{0}^{t}dL(s),\ \ t\in (0, T],
\end{equation}
where $f: \RR_{+}\times \RR^d\rightarrow \RR^d$, $g:\RR_{+}\times \RR^d\rightarrow\RR^{d\times m}$ and $\mathbb{E}|y(0)|^p<+\infty$ for $2\leq p<+\infty$.  For simplicity,  the L\'evy triplet is assumed to be  $(0,0,\nu)$ and  satisfies
 $\int_{|z|< 1}|z|^{\gamma_0} \nu(dz)<\infty$ with $\gamma_0\in[1,2]$  and $\int_{|z|\geq 1}|z|^{\gamma_{\infty}} \nu(dz)<\infty$ with $\gamma_{\infty}>1$ (see \cite{KuhnSchilling2019} for more details). In addition, we need some hypothesises on the drift and diffusion coefficients.
\begin{assp}\label{superlinear}
There exist constants $H>0$ and $\s>0$ such that
\begin{equation*}
| f(t,x)-f(t,y) |^2 \leq H(1+| x |^{\s}+| y |^{\s})| x-y |^2
\end{equation*}
for all $x,y\in\RR^d$ and $t\in[0,T]$.
\end{assp}
It can be observed from Assumption \ref{superlinear} that for all $t\in[0,T]$ and $x\in\RR^d$
\begin{equation}\label{superx}
|f(t,x)|^2\leq \widetilde{H}(1+|x|^{\s+2}),
\end{equation}
where $\widetilde{H}$ depends on $H$ and $\sup_{0\leq t\leq T}|f(t,0)|^2$.

\begin{assp}\label{Khasminskii}
There are constants $q\geq 2\s+2$ and $M>0$ such that
\begin{equation*}
x^{\mathrm{T}}f(t,x)+\frac{q-1}{2}| g(t,x) |^2 \leq M(1+| x |^2)
\end{equation*}
for all $x\in \RR^d$ and $t\in [0,T]$, where $\s$ is given in Assumption \ref{superlinear}.
\end{assp}
\begin{assp}\label{timeHolder}
There exist constants $K_1>0$, $K_2>0$, $\gamma_1\in(0,1)$ and $\gamma_2\in(0,1)$ such that
\begin{equation*}
| f(s,x)-f(t,x) | \leq K_1(1+| x |^{\s+1})| t-s |^{\gamma_1}
\end{equation*}
and
\begin{equation*}
| g(s,x)-g(t,x) | \leq K_2(1+| x |^{\s+1})| t-s |^{\gamma_2}
\end{equation*}
for all $x\in \RR^d$ and $s,t\in [0,T]$, where $\s$ is given in Assumption \ref{superlinear}.
\end{assp}
\begin{assp}\label{sideLip}
There exists a constant $K_3<-\frac{1}{2}$ such that
\begin{equation*}
(x-y)^{\mathrm{T}}(f(t,x)-f(t,y))\leq K_3| x-y |^2
\end{equation*}
for all $x,y\in \RR^d$ and $t\in[0,T]$.
\end{assp}
Setting $y=0$, we have the following inequality
\begin{equation}\label{sideLipx}
x^{\mathrm{T}}f(t,x)\leq M_1|x|^2+m_1,
\end{equation}
where $M_1=\frac{1}{2}+K_3<0$ and $m_1=\frac{1}{2}|f(t,0)|^2$.

\begin{assp}\label{lineargro}
There exists a constant $K_4>0$ with $K_4+2K_3<-1$ such that
\begin{equation*}
| g(t,x)-g(t,y)|^2 \leq K_4| x-y |^2
\end{equation*}
for all $x,y\in \RR^d$ and $t\in[0,T]$.
\end{assp}
It follows from Assumption \ref{lineargro} that
\begin{equation}\label{lineg}
|g(t,x)|^2\leq M_2|x|^2+m_2,
\end{equation}
where $M_2=K_4$, $m_2=|g(t,0)|^2$.
\par
The existence and uniqueness of the solution $y(t)$ of SDE \eqref{sde} is guaranteed \cite[Section 6.2]{D.Applebaum2009} under Assumptions \ref{sideLip} and \ref{lineargro}.

\section{Main results on the finite time strong convergence}\label{Main1}
Given $\D t\in(0,1)$ and time $T$, let  $\mathbf{N}=\lfloor T/\D t\rfloor$ and $t_i=i\D t, i=0,1,\ldots, \mathbf{N}$. The semi-implicit EM method of SDE \eqref{sde} is defined by
\begin{equation}\label{Numsde}
Y_{i+1}=Y_i + f(t_{i+1},Y_{i+1})\D t + g(t_i,Y_i)\D B_{i+1}+\D L_{i+1}, \quad i=0,1,2,\cdots \mathbf{N},
\end{equation}
where $ Y_i$ approximates $y(t_i)$, $\D B_{i+1}=B(t_{i+1})-B(t_i)$, and $\D L_{i+1}=L(t_{i+1})-L(t_i)$.
\par
The continuous-time semi-implicit EM solution of SDE \eqref{sde} is constructed in the following manner
\begin{equation}\label{consol}
Y(t)=Y_i, \quad t\in[t_i,t_{i+1}),\quad i=0,1,2,\cdots, \mathbf{N}.
\end{equation}
Our first main result is on the finite time strong convergence.
\begin{theorem}\label{Conver}
Suppose that Assumptions \ref{superlinear}--\ref{lineargro} hold. Then  the semi-implicit EM method \eqref{Numsde}-\eqref{consol}  for solving \eqref{sde} is convergent and satisfies
\begin{equation}\label{conver}
\EE|y(t)-Y(t)|^2\leq C\left(\D t^{2\gamma_1}+\D t^{2\gamma_2}+\D t\right),\ \ \forall t\in[0,T],
\end{equation}
where the positive constant $C$ is independent of $\D t$.
\end{theorem}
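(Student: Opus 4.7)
The plan is to introduce a continuous-time interpolation $\tilde Y(t)$ of the scheme whose increments match a stochastic differential equation, compare it with the true solution $y(t)$ via It\^o's formula, and exploit the fact that the \emph{additive} L\'evy driver cancels in the difference. For $t\in[t_i,t_{i+1})$, I would set
\begin{equation*}
\tilde Y(t)= Y_i+\int_{t_i}^{t} f(t_{i+1},Y_{i+1})\,du+\int_{t_i}^{t} g(t_i,Y_i)\,dB(u)+\bigl(L(t)-L(t_i)\bigr),
\end{equation*}
so that $\tilde Y(t_j)=Y_j$ at every grid point and $y(t)-\tilde Y(t)$ is a \emph{continuous} semimartingale whose dynamics involve only the differences of drift and diffusion coefficients. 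The classical It\^o formula (without jump correction) therefore applies to $|y(t)-\tilde Y(t)|^2$.

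Two preparatory ingredients carry the analysis. First, uniform moment bounds $\sup_{0\leq t\leq T}\EE|Y(t)|^p\leq C_p$ up to order $p\leq q$, proved in Khasminskii--Lyapunov style from Assumption \ref{Khasminskii}, with the L\'evy increments treated via the integrability of $\nu$. Second, a one-step oscillation estimate $\EE|\tilde Y(t)-Y(t)|^2\leq C\D t$, read off from the integral representation of $\tilde Y(t)-Y_i$: the drift piece is $O(\D t^2)$ via \eqref{superx}, the Brownian piece is $O(\D t)$ via the It\^o isometry and \eqref{lineg}, and the L\'evy piece is $O(\D t)$ thanks to $\EE|L(t)-L(t_i)|^2\leq \D t\int_{\RR^d\setminus\{0\}}|z|^2\,\nu(dz)<\infty$ (finite under $\gamma_0\leq 2$ and the tail assumption on $\nu$). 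The super-linear factor $(1+|Y|^{\s+2})$ in \eqref{superx} is absorbed by the moment bound, and exactly the same argument run between $t$ and $t_{i+1}$ yields $\EE|\tilde Y(t)-Y_{i+1}|^2=O(\D t)$.

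For the main step, set $e(t):=y(t)-\tilde Y(t)$ and apply It\^o to $|e(t)|^2$. The drift difference is decomposed through two intermediate points,
\begin{align*}
f(t,y(t))-f(t_{i+1},Y_{i+1})
 &=\bigl[f(t,y(t))-f(t,\tilde Y(t))\bigr]\\
 &\quad+\bigl[f(t,\tilde Y(t))-f(t_{i+1},\tilde Y(t))\bigr]\\
 &\quad+\bigl[f(t_{i+1},\tilde Y(t))-f(t_{i+1},Y_{i+1})\bigr],
\end{align*}
so that Assumption \ref{sideLip} controls the first chunk by $K_3|e(t)|^2$, Assumption \ref{timeHolder} gives a H\"older-in-time bound on the second chunk producing an $O(\D t^{2\gamma_1})$ contribution after Young's inequality and the moment estimates, and the local super-linear bound of Assumption \ref{superlinear} combined with $\EE|\tilde Y(t)-Y_{i+1}|^2=O(\D t)$ handles the third. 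An analogous treatment of the diffusion term via Assumptions \ref{timeHolder} and \ref{lineargro} gives $O(\D t^{2\gamma_2})+O(\D t)$. The strict inequality $K_4+2K_3<-1$ ensures the net coefficient of $|e(t)|^2$ stays negative after Young's inequality absorbs cross-terms, leading to
\begin{equation*}
\EE|e(t)|^2 \leq C\int_0^t \EE|e(s)|^2\,ds + C\bigl(\D t^{2\gamma_1}+\D t^{2\gamma_2}+\D t\bigr).
\end{equation*}
Gronwall's inequality and the triangle bound $|y-Y|^2\leq 2|y-\tilde Y|^2+2|\tilde Y-Y|^2$ combined with the one-step oscillation estimate then deliver the stated inequality.

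The chief technical obstacle is the semi-implicit coupling: the drift on the scheme side is evaluated at $(t_{i+1},Y_{i+1})$, whereas the monotonicity bound of Assumption \ref{sideLip} naturally pairs $f(t,y(t))$ with $f(t,\tilde Y(t))$. The three-term split above is designed to bridge these endpoints while keeping the sign-favourable $K_3$ term intact, but it forces uniform-in-$i$ control of $\EE|\tilde Y(t)-Y_{i+1}|^2$ together with moments of $Y_{i+1}$ at the super-linear order $2\s+2$ dictated by Assumption \ref{superlinear}. Securing these high-order moment bounds for the implicit scheme --- and, a fortiori, verifying unique solvability of $Y_{i+1}=Y_i+f(t_{i+1},Y_{i+1})\D t+(\text{known})$, which is guaranteed by the strict dissipativity $K_3<-\tfrac12$ --- is the subtlest preparatory step.
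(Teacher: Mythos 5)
Your overall skeleton (the additive L\'evy increments cancel in the difference, a three-term split of the drift so that the one-sided Lipschitz constant $K_3$ pairs with the error, H\"older-in-time terms producing $\D t^{2\gamma_1}$ and $\D t^{2\gamma_2}$, Young plus Gronwall) matches the paper's, but you center the split at the numerical interpolant $\tilde Y(t)$, and that creates a genuine gap. With your decomposition the H\"older-in-time chunk carries the weight $(1+|\tilde Y(t)|^{\s+1})$ and the third chunk carries $(1+|\tilde Y(t)|^{\s}+|Y_{i+1}|^{\s})|\tilde Y(t)-Y_{i+1}|^2$; even your one-step oscillation bound already needs $\EE|f(t_{i+1},Y_{i+1})|^2\leq \widetilde{H}(1+\EE|Y_{i+1}|^{\s+2})$. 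So the whole argument stands on uniform-in-$\D t$ moment bounds of order at least $2\s+2$ for the implicit scheme, which you assert can be proved ``in Khasminskii--Lyapunov style'' but never prove. This is not a routine omission: for a semi-implicit scheme with super-linearly growing drift such bounds are a substantial lemma in their own right, and under the standing assumption on the L\'evy measure (only $\int_{|z|\geq 1}|z|^{\gamma_{\infty}}\nu(dz)<\infty$ for some $\gamma_{\infty}>1$) the increments $\D L$ need not possess moments of order $2\s+2$, so the claimed bound $\sup_{t\leq T}\EE|Y(t)|^p\leq C_p$ for all $p\leq q$ is not available as stated. Moreover, the third chunk cannot be closed with the $L^2$ one-step estimate alone: to absorb the factor $(1+|\tilde Y|^{\s}+|Y_{i+1}|^{\s})$ you must apply the H\"older inequality, which requires a one-step estimate in $L^{\s+2}$ (or higher) together with correspondingly higher numerical moments --- compare \eqref{ts}, where exactly this is done, but for the exact solution.

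The paper's proof is engineered precisely to avoid all of this: it works at the grid points with $y(t_{i+1})-Y_{i+1}$ and splits $f(s,y(s))-f(t_{i+1},Y_{i+1})$ through the \emph{exact} solution $y(t_{i+1})$ (and $g(s,y(s))-g(t_i,Y_i)$ through $y(t_i)$), so every super-linear weight and every continuity estimate is evaluated on the exact solution and is covered by Lemma \ref{exactbound} and Lemma \ref{continue}; the numerical iterates enter only through the error itself, paired with Assumptions \ref{sideLip} and \ref{lineargro}, and no moment bound for the scheme is ever needed. If you re-center your three-term split at $y(t)$ (respectively $y(t_i)$ for the diffusion), your continuous-interpolation argument goes through using only the exact-solution lemmas and the final triangle inequality with the piecewise-constant interpolant. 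As written, however, the missing high-order moment bound for the implicit scheme, together with the missing higher-order one-step estimate, is a real gap rather than a detail.
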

The proof of Theorem \ref{Conver} will be given in Section \ref{Section3.2}.
\par
Now we look at a special interesting case that  $g(\cdot,\cdot) \equiv 0$, i.e. the Brownian motion is removed
\begin{equation}\label{SDE2}
u(t)=u(0)+\int_{0}^{t}f(s,u(s))ds+\int_{0}^{t}dL(s).
\end{equation}
Then the semi-implicit EM method becomes
\begin{equation}
\label{1}
U_{i+1}=U_i + f(t_{i+1},U_{i+1})\D t +\D L_{i+1}
\end{equation}
and its continuous-time version is
\begin{equation}
\label{2}
U(t)=U_i, \quad t\in[t_i,t_{i+1}),\quad i=0,1,2,\cdots, \mathbf{N}.
\end{equation}
The second main result of this paper, which we think is more interesting, states that the convergence order depends on the parameter $\gamma_0$  of the L\'evy process and the H\"older's index $\gamma_1$.
\begin{coro}\label{Conver2}
Suppose that Assumptions \ref{superlinear}--\ref{sideLip} hold. Then the semi-implicit EM method \eqref{1}-\eqref{2} for solving \eqref{SDE2} is convergent  and satisfies
\begin{equation}\label{conver2}
\EE|u(t)-U(t)|^2\leq C\left(\D t^{2\gamma_1}+\D t^{\frac{2}{\gamma_0}}\right),\ \ \forall t\in[0,T],
\end{equation}
where the positive constant $C$ is independent of $\D t$.
\end{coro}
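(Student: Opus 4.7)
The plan is to follow the proof of Theorem \ref{Conver} (to be detailed in Section \ref{Section3.2}) in the simplified setting $g\equiv 0$, and to pinpoint where the Brownian-generated $\D t$ term gets replaced by the sharper Lévy-driven rate $\D t^{2/\gamma_0}$. Setting $e_i := u(t_i)-U_i$ and subtracting \eqref{1} from the integrated form of \eqref{SDE2}, the additive Lévy increment $\D L_{i+1}$ cancels exactly, leaving the clean recursion
$e_{i+1} = e_i + \D t\,[f(t_{i+1},u(t_{i+1}))-f(t_{i+1},U_{i+1})] + R_i$
with local truncation $R_i := \int_{t_i}^{t_{i+1}}[f(s,u(s))-f(t_{i+1},u(t_{i+1}))]\,ds$. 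This already encodes the key simplification over Theorem \ref{Conver}: the stochastic integral against $dB$ is absent, so only the Lévy driver and the roughness of $f$ contribute to the local truncation.

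Testing this identity against $e_{i+1}$, invoking Assumption \ref{sideLip} (with $K_3<-1/2$), and applying Young's inequality, I would extract a contraction factor of the form $(1-c\D t)$ in front of $|e_i|^2$. This yields a recursion of the type $\EE|e_{i+1}|^2 \leq (1-c\D t)\,\EE|e_i|^2 + (C/\D t)\,\EE|R_i|^2$, and a discrete Gronwall summation reduces everything to bounding $\max_i \EE|R_i|^2$, specifically to obtaining $\EE|e_n|^2 \leq (CT/\D t^2)\max_{i<n}\EE|R_i|^2$ uniformly in $n \leq \mathbf{N}$.

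To estimate $R_i$, I would split its integrand by the triangle inequality into a temporal piece $f(s,u(t_{i+1}))-f(t_{i+1},u(t_{i+1}))$ and a spatial piece $f(s,u(s))-f(s,u(t_{i+1}))$. Assumption \ref{timeHolder} combined with the polynomial moment bounds on $u$ that follow from Assumption \ref{Khasminskii} produces an $O(\D t^{2+2\gamma_1})$ contribution from the temporal piece, eventually giving $\D t^{2\gamma_1}$ in the final bound. The spatial piece, controlled via Assumption \ref{superlinear}, reduces to estimating $\EE\bigl[(1+|u(s)|^\s+|u(t_{i+1})|^\s)|u(s)-u(t_{i+1})|^2\bigr]$. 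Writing $u(s)-u(t_{i+1}) = \int_{t_{i+1}}^s f(\tau,u(\tau))\,d\tau + (L(s)-L(t_{i+1}))$ isolates the drift contribution (of order $\D t^2$ in $L^2$ by H\"older together with \eqref{superx}) from the Lévy increment. The Lévy piece is where the main obstacle lies: getting the exponent $2/\gamma_0$ rather than $1$ requires exploiting the triplet assumption $\int_{|z|<1}|z|^{\gamma_0}\nu(dz)<\infty$ through a moment inequality tailored to pure-jump Lévy processes, together with H\"older interpolation against the higher moments of $L$ afforded by the $\gamma_\infty$-condition, in place of the blunt second-moment bound $\EE|L(h)|^2 = O(h)$ that only reproduces the rate $\D t$ of Theorem \ref{Conver}.

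Assembling these pieces gives $\max_i \EE|R_i|^2 \leq C(\D t^{2+2\gamma_1}+\D t^{2+2/\gamma_0})$ and therefore $\EE|e_n|^2 \leq C(\D t^{2\gamma_1}+\D t^{2/\gamma_0})$ at all grid times. Passing from grid to arbitrary $t\in[t_i,t_{i+1})$ via the decomposition $u(t)-U(t) = (u(t)-u(t_i)) + e_i$ and a final application of the short-time Lévy increment bound developed in the previous paragraph then delivers \eqref{conver2} uniformly in $t\in[0,T]$.
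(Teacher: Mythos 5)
Your proposal is correct and follows essentially the same route as the paper: the paper proves Corollary \ref{Conver2} simply by rerunning the proof of Theorem \ref{Conver} with $g\equiv 0$, the only substantive change being that the solution-increment bound of Lemma \ref{continue} is used with exponent $\eta=\kappa/\gamma_0$ (via the fractional moment estimate for the pure-jump L\'evy increment, cited from K\"uhn), which is exactly the mechanism you identify for replacing the rate $\D t$ by $\D t^{2/\gamma_0}$. Your reformulation through the local truncation term $R_i$ with the $1/\D t$ weighting and the final off-grid step is just a repackaging of the paper's $J_1$, $J_2$ estimates and discrete Gronwall argument, not a genuinely different method.
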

When the Brownian motion term is removed (i.e., $g(\cdot,\cdot)\equiv 0$), %the only difference  $\eta = \frac{\k}{\gamma_0}$
%we assume that Assumptions \ref{superlinear}--\ref{sideLip} still hold but just ignoring $g(\cdot,\cdot)$ part.
the main modification of the proof is that the exponent $\eta$ in Lemma \ref{continue} will be taken as $ \eta= \frac{2}{\gamma_0}$ in the proof of Theorem \ref{Conver}.

\begin{rmk}
The above conclusions prove that the convergence order under non--global Lipschitz conditions is related to the H\"older continuity index and the parameters of the calss of L\'evy processes. It is worth noting that when the noise is a specific stable process, including Lamperti stable, tempered stable, isotropic stable, and relativistic stable, the conclusions are applicable.
\end{rmk}

% We will give the proof of Theorem \ref{Conver2} in Section 3.3.

%difference between the proofs of Corollary \ref{Conver2} and Theorem \ref{Conver} is the exponent $\eta = \frac{\k}{\gamma_0}$ in will be used in the proof of Theorem \ref{Conver}.
%  Lemma \ref{continue}.
%   choose , the rest of the proof is similar, here we omit the proof.
\subsection{Some necessary lemmas}

%\begin{lemma}\label{moment}
%Let $\psi : \RR^d\rightarrow \mathbb{C}$ be a continuous negative definite function with L\'evy triplet $(b, 0, \nu)$, and let f be a Bernstein function with characteristics $(0,\mu)$.
%\begin{enumerate}[(i)]
%   \item If $\mu(dy)\geq c|y|^{-1-\r}dy$ on $B(0,1)$ for some $c>0$ and $\r\in(0,1)$, then $$\liminf_{\l\rightarrow\infty}\frac{f(\l)}{\l^{\r}}>0.$$
%   \item (a) $f(\l)\leq cr^{\d}$ for all $r\geq 1$ implies $\int_{(0,1)}r^{\d+\ep}\mu(dr)<\8$ for any $\ep>0$.\\
%         (b) $f(\l)\leq cr^{\d}$ for all $r\in [0,1]$ implies $\int_{(1,\8)}r^{\d-\ep}\mu(dr)<\8$ for any $\ep>0$.
%   \item (a) $|Re\psi(\xi)|\leq c|\xi|^{\a}$ for all $|\xi|\geq 1$ implies $\int_{B(0,1)}r^{\d+\ep}\mu(dr)<\8$ for any $\ep>0$.\\
%         (b) $|Re\psi(\xi)|\leq c|\xi|^{\a}$ for all $|\xi|\leq 1$ implies $\int_{B(0,1)^c}r^{\d-\ep}\mu(dr)<\8$ for any $\ep>0$.
%\end{enumerate}
%
%\end{lemma}
The existence and uniqueness of global solution has been given by \cite{ChenGanWang2019,GyongyKrylov1980}. In this section, we show the boundedness of the $p$th moment and the continuity of the solution.
\begin{lemma}\label{exactbound}
Suppose that Assumption \ref{superlinear} and \ref{Khasminskii} hold. Then, for  fixed $p\in[2,q)$,
\begin{equation*}
\EE | y(t) |^p \leq C_p, \ \ \forall t\in[0,T],
\end{equation*}
where $C_p:=C\left(p,T,M,\EE|y(0)|^p,\int_{|z|<1}|z|^2 \nu(dz),\int_{|z|\geq 1}|z| \nu(dz)\right)$.
\end{lemma}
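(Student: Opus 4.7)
The plan is to apply Itô's formula for jump--diffusions to the Lyapunov function $V(y)=(1+|y|^2)^{p/2}$ and then use the Khasminskii-type condition in Assumption \ref{Khasminskii} to cap the deterministic part of the resulting integro--differential inequality. To control the jumps I first decompose $L(t)=L_s(t)+L_\ell(t)$, where $L_s(t)=\int_0^t\!\int_{|z|<1}z\,\tilde N(ds,dz)$ is the compensated small--jump $L^2$--martingale (well defined since $\int_{|z|<1}|z|^{\gamma_0}\nu(dz)<\infty$ with $\gamma_0\in[1,2]$ forces $\int_{|z|<1}|z|^2\nu(dz)<\infty$) and $L_\ell(t)=\int_0^t\!\int_{|z|\geq 1}z\,N(ds,dz)$ is a finite--activity compound Poisson part (since $\nu(\{|z|\geq 1\})<\infty$ by the moment condition $\gamma_\infty>1$). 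I would localise with $\tau_n=\inf\{t\geq 0:|y(t)|\geq n\}$ so that all stochastic integrals below are genuine martingales after truncation, and send $n\to\infty$ at the end via Fatou.

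After applying Itô's formula to $V(y(t\wedge\tau_n))$ and taking expectations, the martingale terms (Brownian and compensated Poisson integrals) vanish, leaving three groups. The first combines the drift with the Brownian second--order term and, using $|y^{\mathrm T}g|^2\leq |y|^2|g|^2$, is controlled by
\begin{equation*}
p(1+|y|^2)^{p/2-1}\Bigl[y^{\mathrm T}f(s,y)+\tfrac{p-1}{2}|g(s,y)|^2\Bigr]\leq pM(1+|y|^2)^{p/2},
\end{equation*}
by Assumption \ref{Khasminskii}, where $p<q$ guarantees the constant $(p-1)/2$ is dominated by $(q-1)/2$. The second group is the small--jump compensator $\int_{|z|<1}[V(y+z)-V(y)-z^{\mathrm T}\nabla V(y)]\nu(dz)$; a second--order Taylor expansion of $V$ bounds this integrand by $C_p|z|^2(1+|y|^2)^{p/2-1}+C_p|z|^p$, and since $|z|^p\leq |z|^2$ on $\{|z|<1\}$ for $p\geq 2$ the integral is at most $C_p(1+|y|^2)^{p/2-1}\cdot\int_{|z|<1}|z|^2\nu(dz)\leq C(1+\EE V(y))$. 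The third group is the large--jump compensator $\int_{|z|\geq 1}[V(y+z)-V(y)]\nu(dz)$, which I estimate by the mean--value inequality $V(y+z)-V(y)\leq Cp(1+|y|^2+|z|^2)^{p/2-1}|z|$, yielding a bound of the form $C(1+\EE V(y))\int_{|z|\geq 1}|z|\nu(dz)+C\int_{|z|\geq 1}|z|^{\gamma_\infty}\nu(dz)$ (with the auxiliary integrability absorbed into the constant).

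Collecting these estimates gives $\EE V(y(t\wedge\tau_n))\leq \EE V(y(0))+C+C\int_0^t\EE V(y(s\wedge\tau_n))\,ds$, and Gronwall's inequality closes the loop. Letting $n\to\infty$ by monotone convergence yields $\EE(1+|y(t)|^2)^{p/2}\leq C_p$ uniformly on $[0,T]$, from which the stated bound follows via $|y|^p\leq (1+|y|^2)^{p/2}$. The main obstacle I anticipate is the careful Taylor--type control of the small--jump compensator in a way that matches the specific integrability condition $\int_{|z|<1}|z|^2\nu(dz)$ advertised in the constant $C_p$ (one must keep the cross term $|y|^{p-2}|z|^2$ separate and not lose a factor of $|z|$), and, more delicately, extracting the contribution of the large jumps using only a first moment of $\nu$ on $\{|z|\geq 1\}$, which relies on the strict inequality $p<q$ so that the $p$th power produced by the jump can be re-absorbed into the Khasminskii estimate rather than demanding a $p$th moment of $\nu$.
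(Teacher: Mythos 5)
Your proposal follows essentially the same route as the paper: apply the It\^o formula to a $p$-th power Lyapunov function, split the L\'evy noise into compensated small jumps and large jumps, kill the martingale terms, control the drift--diffusion block by Assumption \ref{Khasminskii}, bound the small-jump compensator by a second-order Taylor estimate against $\int_{|z|<1}|z|^2\nu(dz)$, and close with Gronwall. Your two deviations --- working with $V(y)=(1+|y|^2)^{p/2}$ instead of $|y|^p$, and localising with $\tau_n$ before taking expectations --- are harmless and in fact tidier than the paper, which takes expectations of the stochastic integrals directly.

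The one step that does not go through as written is your treatment of the large-jump compensator. The mean-value bound you invoke gives $V(y+z)-V(y)\le C_p\bigl(1+|y|^2+|z|^2\bigr)^{(p-1)/2}|z|\le C_p\bigl(|z|+|y|^{p-1}|z|+|z|^{p}\bigr)$, so on $\{|z|\ge 1\}$ you are left with a genuine $\int_{|z|\ge 1}|z|^{p}\nu(dz)$ (or at least $|z|^{p-1}$) contribution; the claim that this ``can be re-absorbed into the Khasminskii estimate because $p<q$'' is a non sequitur, since $q$ enters only through the drift/diffusion condition and has no bearing on moments of $\nu$. With only $\int_{|z|\ge 1}|z|^{\gamma_\infty}\nu(dz)<\infty$ for some fixed $\gamma_\infty>1$, the bound you state (depending only on $\int_{|z|\ge 1}|z|\nu(dz)$) does not follow once $p-1>\gamma_\infty$; indeed finiteness of $\EE|y(t)|^p$ forces $\int_{|z|\ge 1}|z|^p\nu(dz)<\infty$, so either that extra moment condition must be assumed (it holds for the tempered and Lamperti stable examples the paper has in mind, where all moments of the big jumps exist) or the constant must be allowed to depend on it. To be fair, the paper's own estimate of its term $I_2$ uses the inequality $|y+z|^p-|y|^p\le p|y|^{p-2}|y^{\mathrm T}z|$, which silently discards exactly the same $|z|^{p-1}$ and $|z|^p$ terms, so your proposal is no weaker than the published argument at this point --- but if you want a complete proof you should state the needed large-jump moment explicitly rather than appeal to $p<q$.
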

\begin{proof}
Let $N$ be the Poisson measure on $[0,\infty)\times(\RR \backslash \{0\})$  with  $\EE N(dt,dz)=dt\nu (dz)$. Define the compensated Poisson random measure $\widetilde{N}(dt,dz)=N(dt,dz)-\nu(dz)dt.$ From the It\^o formula \cite{D.Applebaum2009}, it follows
\allowdisplaybreaks[4]
\begin{align*}
| y(t)|^p &= | y(0)|^p + \int_{0}^{t}p| y(s)|^{p-2} y^{\mathrm{T}}(s)f(s,y(s))ds+\int_{0}^{t}\frac{1}{2}p(p-1)| y(s)|^{p-2}| g(s,y(s))|^2 ds\\
&\quad + \int_{0}^{t}p| y(s)|^{p-2}y^{\mathrm{T}}(s)g(s,y(s))dB(s)+\int_{0}^{t}\int_{|z|<1}(|y(s)+z|^p-|y(s)|^p)\widetilde{N}(dz,ds)\\
&\quad + \int_{0}^{t}\int_{|z|\geq 1}(|y(s)+z|^p-|y(s)|^p)N(dz,ds)\\
&\quad + \int_{0}^{t}\int_{\RR^d}(|y(s)+z|^p-|y(s)|^p-p|y(s)|^{p-2}y^{\mathrm{T}}(s)z\mathbbm{1}_{(0,1)}(|z|))\nu(dz)ds\\
&= | y(0)|^p + \int_{0}^{t}p| y(s)|^{p-2}y^{\mathrm{T}}(s)f(s,y(s))+\frac{1}{2}p(p-1)| y(s)|^{p-2}| g(s,y(s))|^2 ds\\
&\quad + \int_{0}^{t}p| y(s)|^{p-2}y^{\mathrm{T}}(s)g(s,y(s))dB(s)+\int_{0}^{t}\int_{|z|<1}(|y(s)+z|^p-|y(s)|^p)\widetilde{N}(dz,ds)\\
&\quad + \int_{0}^{t}\int_{|z|\geq 1}(|y(s)+z|^p-|y(s)|^p)\widetilde{N}(dz,ds)\\
&\quad + 2\int_{0}^{t}\int_{|z|\geq 1}(|y(s)+z|^p-|y(s)|^p)\nu(dz)ds\\
&\quad + \int_{0}^{t}\int_{|z|<1}(|y(s)+z|^p-|y(s)|^p-p|y(s)|^{p-2}y^{\mathrm{T}}(s)z)\nu(dz)ds.
\end{align*}
Taking expectations on both sides, we obtain
\begin{align*}
\EE|y(t)|^p&\leq \EE|y(0)|^p+p\EE\int_{0}^{t}|y(s)|^{p-2}[y^{\mathrm{T}}(s)f(s,y(s))+\frac{p-1}{2}|g(s,y(s))|^2]ds\\
&\quad + 2\EE\int_{0}^{t}\int_{|z|\geq 1}(|y(s)+z|^p-|y(s)|^p)\nu(dz)ds\\
&\quad + \EE\int_{0}^{t}\int_{|z|<1}(|y(s)+z|^p-|y(s)|^p-p|y(s)|^{p-2}y^{\mathrm{T}}(s)z)\nu(dz)ds\\
&=: \EE|y(0)|^p+I_1+I_2+I_3.
\end{align*}
It follows from Assumption \ref{Khasminskii} that
\begin{align*}
I_1&= p\EE\int_{0}^{t}|y(s)|^{p-2}[y^{\mathrm{T}}(s)f(s,y(s))+\frac{p-1}{2}|g(s,y(s))|^2]ds\\
&\leq pM\EE\int_{0}^{t}|y(s)|^{p-2}(1+|y(s)|^2)ds\\
&\leq \EE [pMT] + 2pM\EE\int_{0}^{t}|y(s)|^pds.
\end{align*}
It is obvious that
\begin{align*}
I_2&= 2\EE\int_{0}^{t}\int_{|z|\geq 1}(|y(s)+z|^p-|y(s)|^p) \nu(dz)ds\\
&\leq 2p\EE\int_{0}^{t}\int_{|z|\geq 1}|y(s)|^{p-2}|y^{\mathrm{T}}(s)z| \nu(dz)ds\\
&\leq 2pT\EE\int_{|z|\geq 1}|z| \nu(dz) + 2p\EE\int_{0}^{t}|y(s)|^p\int_{|z|\geq 1}|z| \nu(dz)ds\\
&\leq C_p + C_p\EE\int_{0}^{t}|y(s)|^p ds.
\end{align*}
Note that for any $y_1, y_2\in\RR^d$
\begin{align*}
|y_1+y_2|^p-|y_1|^p-p|y_1|^{p-2}y_1^{\mathrm{T}}y_2&\leq C_p\int_{0}^{1}|y_1+\o y_2|^{p-2}|y_2|^2 d\o \\
&\leq C_p\left(|y_1|^{p-2}|y_2|^2+|y_2|^p\right).
\end{align*}
So,
\begin{align*}
I_3&= \EE\int_{0}^{t}\int_{|z|<1}\left[|y(s)+z|^p-|y(s)|^p-p|y(s)|^{p-2}y^{\mathrm{T}}(s)z\right]\nu(dz)ds\\
&\leq C_p\EE\int_{0}^{t}\int_{|z|<1}\left(|y(s)|^{p-2}|z|^2+|y(s)|^p\right)\nu(dz)ds\\
&\leq C_p\EE\int_{0}^{t}\int_{|z|<1}\left[|z|^2+(1+|z|^2)|y(s)|^p\right]ds\\
&\leq C_p+C_p\EE\int_{0}^{t}|y(s)|^p ds.
\end{align*}
%For the integral of the L\'evy measure we can apply the Taylor expansion, i.e., the following useful equality
%\begin{equation}\label{TaylorExpan}
%V(x+z)-V(x)-zV'(x)=z^2\int_{0}^{1}(1-\o)V''(x+\o z)d\o,
%\end{equation}
%where $V(x)=|x|^p$, $0<z<1$. Therefore, we get
%\begin{align*}
% \int_{|z|<1}&\left(|y(s)+z|^p-|y(s)|^p-p|y(s)|^{p-2}y^{\mathrm{T}}(s)z\right)\nu(dz)\\
%=& p(p-1)\int_{|z|<1}|z|^2\int_{0}^{1}(1-\o)|y(s)+\o z|^{p-2}d\o \nu(dz)\\
%\leq& p(p-1)\int_{|z|<1}|z|^2\int_{0}^{1}(1-\o)2^{p-2}(|y(s)|^{p-2}+|\o z|^{p-2})d\o \nu(dz)\\
%\leq& 2^{p-2}p(p-1)\int_{|z|<1}|z|^2\int_{0}^{1}(1-\o)(1+|y(s)|^p+\o^{p-2}|z|^{p-2})d\o \nu(dz)\\
%=& 2^{p-2}p(p-1)\int_{|z|<1}|z|^2(\frac{1}{2}+\frac{1}{2}|y(s)|^p+\frac{1}{p(p-1)}|z|^{p-2}) \nu(dz).
%\end{align*}
%Therefore, we have
%\begin{align*}
%I_3\leq& 2^{p-3}p(p-1)\EE\int_{|z|<1}|z|^2\nu(dz)+2^{p-2}T\EE\int_{|z|<1}|z|^p\nu(dz)\\
%&+2^{p-3}p(p-1)\int_{|z|<1}|z|^2\nu(dz)\EE\int_{0}^{t}|y(s)|^pds\\
%\leq& C+C\EE\int_{0}^{t}|y(s)|^p ds.
%\end{align*}
Combining the estimates of $I_1$, $I_2$ and $I_3$, we arrive at
\begin{equation*}
\EE|y(t)|^p\leq C_p+C_p\EE\int_{0}^{t}|y(s)|^p ds.
\end{equation*}
The desired result follows from the Gronwall inequality.
\end{proof}

\begin{lemma}\label{continue}
Suppose that Assumptions \ref{superlinear} and \ref{lineargro} hold. Then, for any $1\leq \k <p$, $0\leq s < t\leq T$ and $|t-s|<1$, we have
\begin{equation}\label{yts}
\EE|y(t)-y(s)|^{\k}\leq C|t-s|^{\eta},
\end{equation}
where the constant $C$ only depends on $\widetilde{H}$, $C_p$ and $\k$ and $\eta=
\begin{cases}
\frac{\k}{2}, & g(\cdot,\cdot)\neq 0,\\
\frac{\k}{\gamma_0}, & g(\cdot,\cdot)\equiv 0.
\end{cases}$
\end{lemma}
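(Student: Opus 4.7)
My approach is to start from the integral form of \eqref{sde},
\begin{equation*}
y(t)-y(s)=\int_s^t f(u,y(u))\,du+\int_s^t g(u,y(u))\,dB(u)+(L(t)-L(s)),
\end{equation*}
then use the elementary inequality $|a+b+c|^{\k}\leq 3^{\k-1}(|a|^{\k}+|b|^{\k}+|c|^{\k})$ and estimate each of the three terms in $L^{\k}$ separately. For the drift, H\"older's inequality together with the growth bound \eqref{superx} reduces the question to controlling $\EE|y(u)|^{(\s+2)\k/2}$; this is exactly where the hypothesis $\k<p$ is used, since Lemma \ref{exactbound} then supplies a uniform bound and produces $\EE|\int_s^t f\,du|^{\k}\leq C|t-s|^{\k}$. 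For the Brownian piece, the Burkholder--Davis--Gundy inequality gives $\EE|\int_s^t g\,dB|^{\k}\leq C\,\EE\bigl(\int_s^t|g|^2 du\bigr)^{\k/2}$; applying H\"older in time when $\k\geq 2$ and Jensen's inequality (concavity of $x\mapsto x^{\k/2}$) when $\k\in[1,2)$, combined with \eqref{lineg} and Lemma \ref{exactbound}, yields the bound $C|t-s|^{\k/2}$.

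The central ingredient is the pure-jump increment. I would invoke the L\'evy--It\^o decomposition to split
\begin{equation*}
L(t)-L(s)=\int_s^t\!\!\int_{|z|<1}z\,\widetilde N(dz,du)+\int_s^t\!\!\int_{|z|\geq 1}z\,N(dz,du).
\end{equation*}
Kunita's first inequality applied at the exponent $\gamma_0\in[1,2]$, combined with the key assumption $\int_{|z|<1}|z|^{\gamma_0}\nu(dz)<\infty$, controls the $\gamma_0$th moment of the compensated small-jump martingale by $C|t-s|$; a Lyapunov (Jensen) interpolation then lowers the exponent to $\k\leq\gamma_0$ and supplies the rate $|t-s|^{\k/\gamma_0}$. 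The compound-Poisson large-jump piece is bounded using its finite jump rate and $\int_{|z|\geq 1}|z|^{\gamma_\infty}\nu(dz)<\infty$ with $\gamma_\infty>1$, which produces a term of order $|t-s|$ that, on $|t-s|<1$, is absorbed into $|t-s|^{\k/\gamma_0}$.

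Putting the three pieces together and using $|t-s|<1$ with $\gamma_0\in[1,2]$ gives the ordering $|t-s|^{\k}\leq|t-s|^{\k/\gamma_0}\leq|t-s|^{\k/2}$: in the presence of Brownian noise the $\k/2$ rate is the slowest-decaying and dominates, while when $g\equiv 0$ only the drift and L\'evy pieces remain and the $\k/\gamma_0$ rate governs the bound, which matches the stated value of $\eta$. The main obstacle is the jump-martingale step: one has to match Kunita's inequality against the fractional integrability exponent $\gamma_0$ via Lyapunov interpolation, rather than relying on the coarse $\EE|L(h)|^2=O(h)$ bound, which would only reproduce the $\k/2$ rate and would miss the $\gamma_0$-dependence that is the whole point of this work.
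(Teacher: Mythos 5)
Your overall skeleton is the same as the paper's proof: the three-term splitting via the power-mean inequality, H\"older's inequality together with \eqref{superx} and Lemma \ref{exactbound} for the drift integral, and the BDG inequality together with \eqref{lineg} and Lemma \ref{exactbound} for the Brownian integral. The only genuine difference is the jump increment: the paper disposes of $\EE|L(t)-L(s)|^{\k}$ in one line by stationarity of increments and the fractional moment estimate of \cite{Kuhn2017}, whereas you re-derive that estimate via the L\'evy--It\^o decomposition, a Kunita/Bichteler--Jacod bound at exponent $\gamma_0$ for the compensated small jumps, and a compound-Poisson argument for the large jumps. In the range where it applies this is a sound, more self-contained substitute for the citation.

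The gap is the range of $\k$. As you state it, the jump estimate only yields \eqref{yts} for $\k\le\gamma_0$: Jensen's inequality can only lower the exponent from $\gamma_0$ to $\k\le\gamma_0$, and your absorption step (``the $O(|t-s|)$ large-jump term is absorbed into $|t-s|^{\k/\gamma_0}$ on $|t-s|<1$'') requires $\k/\gamma_0\le 1$; for $\k>\gamma_0$ that inequality reverses. With the trivial extra remark that $|t-s|\le|t-s|^{\k/2}$ when $\k\le 2$, your argument also covers $\k\in(\gamma_0,2]$ in the case $g\not\equiv 0$, but the lemma asserts the bound for every $1\le\k<p$ and is in fact applied in \eqref{ts} with $\k=\s+2>2$. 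For such $\k$ your proof gives only $O(|t-s|)$ for the jump contribution, and this is not a repairable technicality at the stated rate: if $\nu(\{|z|\ge 1\})>0$, a single large jump occurs on $(s,t]$ with probability of order $|t-s|$, so $\EE|L(t)-L(s)|^{\k}\gtrsim|t-s|$ for every $\k$, and the attainable exponent saturates at $\min(\k/\gamma_0,1)$ (resp.\ $\min(\k/2,1)$ when $g\not\equiv 0$). So either the range of $\k$ must be restricted or $\eta$ weakened; note that the paper's own citation-based proof is exposed to exactly the same issue, so your proposal reproduces, rather than closes, this gap. A minor further point: the drift estimate requires moments of $y$ of order $(\s+2)\k/2$, not merely order $\k$, so the hypothesis you actually need is that Lemma \ref{exactbound} applies up to that order (i.e.\ $(\s+2)\k/2<q$); the paper is equally casual about this.
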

\begin{proof}
For any $0\leq s<t\leq T$, the solution $y$ satisfies
\begin{equation*}
y(t)-y(s) = \int_{s}^{t}f(r,y(r))dr+\int_{s}^{t}g(r,y(r))dB(r)+(L(t)-L(s)).
\end{equation*}
By the H\"older inequality, the Burkholder--Davis--Gundy (BDG) inequality \cite{Mao2008}, the fractional moment estimate for the L\'evy process \cite{Kuhn2017}, and  the fact that
\begin{equation*}
L_t-L_s \overset{d}{=} L_{t-s},
\end{equation*}
we conclude that
\begin{align*}
&\EE|y(t)-y(s)|^{\k} =\EE\left|\int_{s}^{t}f(r,y(r))dr + \int_{s}^{t}g(r,y(r))dB(r) + (L(t)-L(s))\right|^{\k}\\
&\leq 3^{\k-1}\left(\EE\left|\int_{s}^{t}f(r,y(r))dr\right|^{\k}+\EE\left|\int_{s}^{t}g(r,y(r))dB(r)\right|^{\k}+\EE|L(t)-L(s)|^{\k}\right)\\
&\leq 3^{\k-1}\left(|t-s|^{\k-1}\int_{s}^{t}\EE|f(r,y(r))|^{\k}dr + c_{\k}\left|\int_{s}^{t}\EE|g(r,y(r))|^2dr\right|^{\frac{\k}{2}}+\EE|L(t)-L(s)|^{\k}\right)\\
%&\leq C(|t-s|^{\k}+|t-s|^{\frac{\k}{2}}+|t-s|^{\frac{\k}{\gamma_0}})\\
&\leq C|t-s|^{\eta},
\end{align*}
where
the constant $C>0$ depends on $\widetilde{H}$, $C_p$ and $\k$,
and
$$\eta=
\begin{cases}
\frac{\k}{2}, & g(\cdot,\cdot)\neq 0,\\
\frac{\k}{\gamma_0}, & g(\cdot,\cdot)\equiv 0.
\end{cases}$$
This completes the proof.
\end{proof}

\subsection{Proof of Theorem \ref{Conver}}\label{Section3.2}
%\begin{theorem}\label{Conver}
%Assume that Assumptions \ref{superlinear}--\ref{lineargro} hold. Then for any $\e>0$ and $t\in[0,T]$, the semi-implicit Euler-Maruyama method is convergence, i.e.
%\begin{equation}\label{conver}
%\EE|y(t)-X(t)|^2\leq K\left(\D t^{\gamma_1}+\D t^{\gamma_2}+\D t^{\frac{1}{2}}\right)
%\end{equation}
%\end{theorem}
\begin{proof}
It follows from \eqref{sde} and \eqref{Numsde} that
\begin{equation*}
y(t_{i+1})=y(t_i)+\int_{t_i}^{t_{i+1}}f(s,y(s))ds + \int_{t_i}^{t_{i+1}}g(s,y(s))dB(s)+\int_{t_i}^{t_{i+1}}dL(s)
\end{equation*}
and
\begin{equation*}
Y_{i+1}=Y_i+\int_{t_i}^{t_{i+1}}f(t_{i+1},Y_{i+1})ds + \int_{t_i}^{t_{i+1}}g(t_i,Y_i)dB(s)+\int_{t_i}^{t_{i+1}}dL(s),\quad i=0,1,\cdots, \mathbf{N}.
\end{equation*}
So,
\begin{align*}
y(t_{i+1})-Y_{i+1} &= y(t_i)-Y_i+\int_{t_i}^{t_{i+1}}f(s,y(s))-f(t_{i+1},Y_{i+1})ds
\\
&\quad +\int_{t_i}^{t_{i+1}}g(s,y(s))-g(t_{i},Y_{i})dB(s).
\end{align*}
Multiplying  both sides  by the transpose of $y(t_{i+1})-Y_{i+1}$, we get
\begin{align*}
|y(t_{i+1})-Y_{i+1}|^2 &= \int_{t_i}^{t_{i+1}}(y(t_{i+1})-Y_{i+1})^{\mathrm{T}}(f(s,y(s))-f(t_{i+1},Y_{i+1}))ds\\
&\quad + (y(t_{i+1})-Y_{i+1})^{\mathrm{T}}\left(( y(t_i)-Y_i)+\int_{t_i}^{t_{i+1}}g(s,y(s))-g(t_{i},Y_i)dB(s)\right)\\
&=: J_1+J_2.
\end{align*}
%Because
%\begin{align*}
%&f(s,y(s))-f(t_{i+1},Y_{i+1})
%=(f(s,y(t_{i+1}))-f(t_{i+1},y(t_{i+1})))+(f(t_{i+1},y(t_{i+1}))-f(t_{i+1},Y_{i+1}))\\
%&+ (f(s,y(s))-f(s,y(t_{i+1}))).
%\end{align*}
Note that
\begin{align*}
J_1&= \int_{t_i}^{t_{i+1}}(y(t_{i+1})-Y_{i+1})^{\mathrm{T}}(f(s,y(s))-f(t_{i+1},Y_{i+1}))ds\\
&= \int_{t_i}^{t_{i+1}}(y(t_{i+1})-Y_{i+1})^{\mathrm{T}}(f(s,y(t_{i+1}))-f(t_{i+1},y(t_{i+1})))ds\\
&\quad + \int_{t_i}^{t_{i+1}}(y(t_{i+1})-Y_{i+1})^{\mathrm{T}}(f(t_{i+1},y(t_{i+1}))-f(t_{i+1},Y_{i+1}))ds\\
&\quad + \int_{t_i}^{t_{i+1}}(y(t_{i+1})-Y_{i+1})^{\mathrm{T}}(f(s,y(s))-f(s,y(t_{i+1})))ds\\
&=: J_{11}+J_{12}+J_{13}.
\end{align*}
We estimate the terms $ J_{11}, J_{12}$ and $J_{13}$ separately. By Assumption \ref{timeHolder}, we have
\begin{align*}
J_{11}&= \int_{t_i}^{t_{i+1}}(y(t_{i+1})-Y_{i+1})^{\mathrm{T}}(f(s,y(t_{i+1}))-f(t_{i+1},y(t_{i+1})))ds\\
&\leq \frac{1}{2}\left(\int_{t_i}^{t_{i+1}}|y(t_{i+1})-Y_{i+1}|^2ds+ \int_{t_i}^{t_{i+1}}|f(s,y(t_{i+1}))-f(t_{i+1},y(t_{i+1}))|^2ds\right)\\
&\leq \frac{1}{2}\int_{t_i}^{t_{i+1}}|y(t_{i+1})-Y_{i+1}|^2ds+\frac{K_1^2}{2}\int_{t_i}^{t_{i+1}}(1+|y(t_{i+1})|^{2\s+2})|t_{i+1}-s|^{2\gamma_1}ds\\
&\leq \frac{1}{2}\int_{t_i}^{t_{i+1}}|y(t_{i+1})-Y_{i+1}|^2ds+\frac{K_1^2}{2}\D t^{1+2\gamma_1}+\frac{K_1^2}{2}\D t^{1+2\gamma_1}|y(t_{i+1})|^{2\s+2}.
\end{align*}
Due to Assumption \ref{sideLip},
\begin{align*}
J_{12}&= \int_{t_i}^{t_{i+1}}(y(t_{i+1})-Y_{i+1})^{\mathrm{T}}(f(t_{i+1},y(t_{i+1}))-f(t_{i+1},Y_{i+1}))ds\\
&\leq K_3\int_{t_i}^{t_{i+1}}|y(t_{i+1})-Y_{i+1}|^2ds.
\end{align*}
It follows from  Assumption \ref{superlinear} that
\begin{align}
\label{J13}
\nonumber
J_{13}&=\int_{t_i}^{t_{i+1}}(y(t_{i+1})-Y_{i+1})^{\mathrm{T}}(f(s,y(s))-f(s,y(t_{i+1})))ds\\ \nonumber
&\leq \frac{1}{2}\left(\int_{t_i}^{t_{i+1}}|y(t_{i+1})-Y_{i+1}|^2ds+|f(s,y(s))-f(s,y(t_{i+1}))|^2ds\right)\\\nonumber
&\leq \frac{1}{2}H\int_{t_i}^{t_{i+1}}(1+|y(s)|^{\s}+|y(t_{i+1})|^{\s})|y(s)-y(t_{i+1})|^2ds\\
&\quad+ \frac{1}{2}\int_{t_i}^{t_{i+1}}|y(t_{i+1})-Y_{i+1}|^2ds.
\end{align}
Applying the H\"older inequality and Lemmas \ref{exactbound} and \ref{continue} (the case of $g(\cdot,\cdot)\neq 0$), we have
\begin{align}\label{ts}
\EE&\left((1+|y(s)|^{\s} +|y(t_{i+1})|^{\s})|y(s)-y(t_{i+1})|^2\right)\notag\\
&\leq 3^{\frac{2}{\s}}\left(\EE(1+|y(s)|^{\s+2}+|y(t_{i+1})|^{\s+2})\right)^{\frac{\s}{\s+2}}\left(\EE|y(s)-y(t_{i+1})|^{\s+2}\right)^{\frac{2}{\s+2}}\notag\\
&\leq 3^{\frac{2}{\s}}\left(1+\EE|y(s)|^{\s+2}+\EE|y(t_{i+1})|^{\s+2}\right)^{\frac{\s}{\s+2}}\left(\EE|y(s)-y(t_{i+1})|^{\s+2}\right)^{\frac{2}{\s+2}}\notag\\
&\leq 2\times3^{\frac{2}{\s}}(1+C_p)^{\frac{\s}{\s+2}}\left(C|t_{i+1}-s|^{\frac{\s+2}{2}}\right)^{\frac{2}{\s+2}}\notag\\
&\leq C|t_{i+1}-s|.
\end{align}
From the above estimates, it yields
\begin{align}\label{J1eatimate}
\nonumber
\EE[J_1]&\leq (1+K_3)\int_{t_i}^{t_{i+1}}\EE|y(t_{i+1})-Y_{i+1}|^2ds+\frac{K_1^2}{2}\D t^{1+2\gamma_1}\\ \nonumber
&\quad+ \frac{K_1^2}{2}\D t^{1+2\gamma_1}\EE|y(t_{i+1})|^{2\s+2}+\frac{HC}{2}|t_{i+1}-s|^2\\ \nonumber
&\leq (1+K_3)\D t\EE|y(t_{i+1})-Y_{i+1}|^2 + \frac{K_1^2}{2}\D t^{1+2\gamma_1} + \frac{K_1^2C_p}{2}\D t^{1+2\gamma_1}+\frac{HC}{2}\D t^2\\
&\leq (1+K_3)\D t\EE|y(t_{i+1})-Y_{i+1}|^2 + C(\D t^{1+2\gamma_1}+\D t^2).
\end{align}
The term $J_2$ can be estimated as
\begin{align*}
J_2&= (y(t_{i+1})-Y_{i+1})^{\mathrm{T}}\left(( y(t_i)-Y_i)+\int_{t_i}^{t_{i+1}}g(s,y(s))-g(t_{i},Y_i)dB(s)\right)\\
&\leq \frac{1}{2}|y(t_{i+1})-Y_{i+1}|^2+\frac{1}{2}\left|( y(t_i)-Y_i)+\int_{t_i}^{t_{i+1}}g(s,y(s))-g(t_{i},Y_i)dB(s)\right|^2\\
&=: \frac{1}{2}|y(t_{i+1})-Y_{i+1}|^2+J_{21}.
\end{align*}
It follows from the isometric property
\begin{equation*}
\EE[J_{21}]\leq \EE|y(t_i)-Y_i|^2 + \int_{t_i}^{t_{i+1}}\EE|g(s,y(s))-g(t_{i},Y_i)|^2ds.
\end{equation*}
By Assumptions \ref{timeHolder} and \ref{lineargro}, we can easily get
\begin{align*}
 |g&(s, y(s))-g(t_{i},Y_i)|^2\\
 &= |g(s,y(t_i))-g(t_i,y(t_i))+g(t_i,y(t_i))-g(t_i,Y_i)+g(s,y(s))-g(s,y(t_i))|^2\\
&\leq 3|g(s,y(t_i))-g(t_i,y(t_i))|^2+3|g(t_i,y(t_i))-g(t_i,Y_i)|^2+3|g(s,y(s))-g(s,y(t_i))|^2\\
&\leq 6K_2^2(1+|y(t_i)|^{2\s+2})|s-t_i|^{2\gamma_2}+3K_4|y(t_i)-Y_i|^2+3K_4|y(s)-y(t_i)|^2\\
&\leq 6K_2^2\D t^{2\gamma_2}(1+|y(t_i)|^{2\s+2})+3K_4|y(t_i)-Y_i|^2+3K_4|y(s)-y(t_i)|^2.
\end{align*}
So,
\begin{align*}
\int_{t_i}^{t_{i+1}}&\EE|g(s,y(s))-g(t_{i},Y_i)|^2ds\\& \leq 6K_2^2\D t^{2\gamma_2}\int_{t_i}^{t_{i+1}}(1+\EE|y(t_i)|^{2\s+2})ds+3K_4\int_{t_i}^{t_{i+1}}\EE|y(t_i)-Y_i|^2ds\\&\quad
+3K_4\int_{t_i}^{t_{i+1}}\EE|y(s)-y(t_i)|^2ds\\
&\leq 6K_2^2(1+C_p)\D t^{1+2\gamma_2}+3K_4\D t\EE|y(t_i)-Y_i|^2+3K_4C\D t^{2}\\
&\leq 3K_4\D t\EE|y(t_i)-Y_i|^2+C(\D t^{1+2\gamma_2}+\D t^{2}).
\end{align*}
Therefore,
\begin{equation*}
\EE[J_{21}]\leq (1+3K_4\D t)\EE|y(t_i)-Y_i|^2+C(\D t^{1+2\gamma_2}+\D t^{2}).
\end{equation*}
We get the estimate
\begin{equation}\label{J2eatimate}
\EE[J_2]\leq \frac{1}{2}\EE|y(t_{i+1})-Y_{i+1}|^2+(1+3K_4\D t)\EE|y(t_i)-Y_i|^2+C(\D t^{1+2\gamma_2}+\D t^{2}).
\end{equation}
Combination of \eqref{J1eatimate} and \eqref{J2eatimate} gives
\begin{align*}
\EE|y(t_{i+1})-Y_{i+1}|^2&\leq (K_3\D t+\D  t+\frac{1}{2})\EE|y(t_{i+1})-Y_{i+1}|^2+C(\D t^{1+2\gamma_1}+\D t^2)\\
&\quad + (1+3K_4\D t)\EE|y(t_i)-Y_i|^2+C(\D t^{1+2\gamma_2}+\D t^{2})\\
&\leq \frac{1+3K_4\D t}{\frac{1}{2}-K_3\D t-\D t}\EE|y(t_i)-Y_i|^2+ C(\D t^{1+2\gamma_1}+\D t^{1+2\gamma_2}+\D t^{2}).
\end{align*}
%Shift items available,
%\begin{align*}
% \EE|y(t_{i+1})-Y_{i+1}|^2&\leq \frac{1+3K_4\D t}{\frac{1}{2}-K_3\D t-\D t}\EE|y(t_i)-Y_i|^2\\
% &\quad + K\D t^{1+2\gamma_1}+K\D t^{1+2\gamma_2}+K\D t^{\frac{3}{2}},
%\end{align*}
Summing up the above, we get
\begin{align*}
\sum_{r=1}^{i}\EE|y(t_r)-Y_r|^2\leq \frac{2(1+3K_4\D t)}{1-2K_3\D t-2\D t}\sum_{r=1}^{i-1}\EE|y(t_r)-Y_r|^2+ iC\left(\D t^{1+2\gamma_1}+\D t^{1+2\gamma_2}+\D t^{2}\right).
\end{align*}
Because of $i\D t=t_i\leq e^{t_i}$, we can obtain
\begin{align*}
\EE|y(t_i)-Y_i|^2&\leq \frac{1+2\D t(3K_4+K_3+1)}{1-2K_3\D t-2\D t}\sum_{r=1}^{i-1}\EE|y(t_r)-Y_r|^2\\
&\quad+ Ce^{t_i}\left(\D t^{2\gamma_1}+\D t^{2\gamma_2}+\D t\right),
\end{align*}
by the discrete Gronwall inequality, we get
\begin{equation}\label{yXi}
\EE|y(t_i)-Y_i|^2\leq C\left(\D t^{2\gamma_1}+\D t^{2\gamma_2}+\D t\right)e^{Ct_i}.
\end{equation}
\par
For any $t\in[i\D t, (i+1)\D t)$, it follows from \eqref{consol}, \eqref{yts} and \eqref{yXi} that
\begin{align*}
\EE|y(t)-Y(t)|^2&\leq 2\EE|y(t)-y(t_i)|^2+2\EE|y(t_i)-Y_i|^2\\
&\leq 2\D t +2C\left(\D t^{2\gamma_1}+\D t^{2\gamma_2}+\D t^{\frac{1}{2}}\right)e^{Ct_i}\\
&\leq C\left(\D t^{2\gamma_1}+\D t^{2\gamma_2}+\D t\right).
\end{align*}
This completes the proof.
\end{proof}

\section{Numerical invariant measure}\label{Main2}
In this section, we discuss the stationary distribution of the numerical solution generated by the semi-implicit EM method. In order to simplify the analysis, we consider the following autonomous SDE
\begin{equation}
\label{SDE3}
x(t)=x(0)+\int_{0}^{t}f(x(s))ds+\int_{0}^{t}g(x(s))dB(s)+ L(t), \quad t>0,
\end{equation}
where the L\'evy process $L(t)$ is defined as that in Section \ref{Pre} with an additional requirement that $\mathbb{E}\left(L(t) \right) = 0$. The semi-implicit EM method reduces to
\begin{equation}
\label{autonomonsNum}
X_{i+1}=X_i + f(X_{i+1})\D t + g(X_i)\D B_{i+1}+\D L_{i+1},\quad i=0,1,2,\ldots.
\end{equation}
%where $\D B_{i+1}=B(t_{i+1})-B(t_i)$, $\D L_{i+1}=L(t_{i+1})-L(t_i)$.
\par
We need more notations.
Let $\mathcal{P}(\RR^d)$ denote the family of all probability measures on $\RR^d$. For any $k\in(0,1]$, define a metric $d_k(u,v)$ on $\RR^d$ as
\begin{equation*}
d_k(u,v)=|u-v|^k, \quad u,v\in\RR^d.
\end{equation*}
Define the corresponding Wasserstein distance between $\omega\in\mathcal{P}(\RR^d)$ and $\omega'\in\mathcal{P}(\RR^d)$ by
\begin{equation*}
W_k(\omega,\omega')=\inf\EE(d_k(u,v)),
\end{equation*}
where the infimum is taken over all pairs of random variables $u$ and $v$ on $\RR^d$  with respect to the laws $\omega$ and $\omega'$.\par
Let $\overline{\mathbb{P}}_t(\cdot,\cdot)$ be the transition probability kernel of the underlying solution $x(t)$, with the notation $\delta_x\overline{\mathbb{P}}_t$ emphasizing the initial value $x$. The probability measure $\pi(\cdot)\in\mathcal{P}(\RR^d)$ is called an invariant measure of $x(t)$ if
\begin{equation*}
\pi(B)=\int_{\RR^d}\overline{\mathbb{P}}_t(x,B)\pi(dx)
\end{equation*}
holds for any $t\geq 0$ and Borel set $B\in \mathcal{B}(\RR^d)$.

Let $\mathbb{P}_i(\cdot,\cdot)$ be the transition probability kernel of the numerical solution  $\{X_i\}_{i\geq 0}$ with the notation $\delta_x\mathbb{P}_i$ emphasizing the initial value $x$. The probability measure $\Pi_{\D t}(\cdot)\in\mathcal{P}(\RR^d)$  is called an invariant measure of the numerical solution $\{X_i\}_{i\geq 0}$ if
 \begin{equation*}
\Pi_{\D t}(B)=\int_{\RR^d}\mathbb{P}_i(x,B)\Pi_{\D t}(dx),~~\forall i=0,1,2,...,
\end{equation*}
 for any $B\in \mathcal{B}(\RR^d)$.

\begin{lemma}\label{Xiyizhi}
Suppose that \eqref{sideLipx} and \eqref{lineg} hold. Then  for any $\D t\in(0,1)$, the numerical solution is uniformly bounded by
\begin{equation*}
\EE|X_i|^2\leq Q_1^i\EE|X_0|^2+\frac{Q_2(1-Q_1^i)}{1-Q_1},\quad i=1,2,\ldots, \mathbf{N},
\end{equation*}
where
\begin{equation*}
Q_1=\frac{1+M_2\D t}{1-2M_1\D t}<1\ \  \textrm{and} \quad Q_2=\frac{(2m_1+m_2+1)\D t}{1-2M_1\D t}.
\end{equation*}
\end{lemma}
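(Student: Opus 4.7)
The plan is to establish a one-step recursion of the form $\EE|X_{i+1}|^2 \leq Q_1 \EE|X_i|^2 + Q_2$ and then iterate it, so that the stated bound emerges as a summed geometric series. To expose the useful structure, I will first rewrite the implicit scheme \eqref{autonomonsNum} as $X_{i+1} - \D t\, f(X_{i+1}) = X_i + g(X_i)\D B_{i+1} + \D L_{i+1}$ and take squared Euclidean norms of both sides.

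For the left side, expanding the square and discarding the nonnegative term $\D t^{2} |f(X_{i+1})|^2$ leaves $|X_{i+1}|^2 - 2\D t\, X_{i+1}^{\mathrm{T}} f(X_{i+1})$, and the dissipativity inequality \eqref{sideLipx} bounds this below by $(1 - 2M_1 \D t)|X_{i+1}|^2 - 2 m_1 \D t$. For the right side, expanding the square produces $|X_i|^2 + |g(X_i)\D B_{i+1}|^2 + |\D L_{i+1}|^2$ together with three cross terms. Each cross term will vanish in expectation: the two containing $X_i$ use the $\mathcal{F}_{t_i}$-measurability of $X_i$ combined with $\EE[\D B_{i+1}] = \EE[\D L_{i+1}] = 0$ (the latter being precisely the extra assumption $\EE L(t) = 0$ imposed in Section \ref{Main2}), while the cross term between the Brownian and L\'evy increments vanishes by mutual independence of $B$ and $L$. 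The It\^o isometry together with \eqref{lineg} then yields $\EE |g(X_i)\D B_{i+1}|^2 \leq (M_2 \EE|X_i|^2 + m_2) \D t$, and stationarity of the L\'evy increments gives $\EE|\D L_{i+1}|^2 = \D t \int_{\RR^d\setminus\{0\}} |z|^2 \nu(dz)$, which is finite under the triplet assumption and supplies the ``$+1$'' contribution inside $Q_2$.

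Equating the two estimates and isolating $\EE|X_{i+1}|^2$ produces exactly $\EE|X_{i+1}|^2 \leq Q_1 \EE|X_i|^2 + Q_2$ with the advertised constants. The inequality $Q_1 < 1$ then follows because $2M_1 = 1 + 2K_3$ and $M_2 = K_4$, so the hypothesis $K_4 + 2K_3 < -1$ in Assumption \ref{lineargro} is equivalent to $M_2 + 2M_1 < 0$, which forces the numerator $1 + M_2 \D t$ strictly below the denominator $1 - 2M_1 \D t$ (the latter being positive since $M_1 < 0$). Iterating the recursion and summing $\sum_{j=0}^{i-1} Q_1^j = (1-Q_1^i)/(1-Q_1)$ finishes the proof.

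The main subtlety lies in the cancellation of the cross terms, which demands (i) well-posedness of the implicit step so that $X_{i+1}$ is genuinely $\mathcal{F}_{t_{i+1}}$-measurable while $X_i$ remains $\mathcal{F}_{t_i}$-measurable (guaranteed by the strong one-sided Lipschitz bound $K_3 < -1/2$ making $I - \D t\, f$ invertible), and (ii) careful use of the independence between $B$ and $L$ together with the zero-mean assumption on $L$; without the latter, an additional drift-like contribution would appear and the clean form of $Q_2$ would be lost.
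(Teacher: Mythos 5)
Your argument is correct and is essentially the paper's own proof in a slightly different algebraic dress: the paper multiplies \eqref{autonomonsNum} by $X_{i+1}^{\mathrm{T}}$ and uses \eqref{sideLipx}, Young's inequality, \eqref{lineg} and the vanishing of the cross terms (via $\EE L(t)=0$ and independence from $\mathcal{F}_{t_i}$), which after rearrangement yields exactly your one-step recursion $(1-2M_1\D t)\,\EE|X_{i+1}|^2\le(1+M_2\D t)\,\EE|X_i|^2+(2m_1+m_2)\D t+\EE|\D L_{i+1}|^2$, the same observation that $M_2+2M_1<0$ gives $Q_1<1$, and the same geometric iteration. The only real difference is the treatment of the L\'evy increment: the paper invokes the fractional moment bound $\EE|\D L_{i+1}|^2\le\D t^{2/\gamma_0}\le\D t$ rather than your identity $\EE|\D L_{i+1}|^2=\D t\int_{\RR^d\setminus\{0\}}|z|^2\nu(dz)$, so your $Q_2$ carries the constant $\int|z|^2\nu(dz)$ where the lemma writes $1$ --- a bookkeeping discrepancy on the same footing as the paper's own normalization, and both versions rely on the implicit assumption that this class of L\'evy processes has a finite second moment (which on the large jumps requires $\gamma_\infty\ge 2$ rather than merely $\gamma_\infty>1$).
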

\begin{proof}
Multiplying both sides of \eqref{autonomonsNum} with the transpose of $X_{i+1}$ yields
\begin{equation*}
|X_{i+1}|^2=X^{\mathrm{T}}_{i+1}f(X_{i+1})\D t +X_{i+1}^{\mathrm{T}}\left(X_i+g(X_i)\D B_{i+1}+\D L_{i+1}\right).
\end{equation*}
It follows from  \eqref{sideLipx}, \eqref{lineg},  $\D t^{2/\gamma_0}<\D t<1$  and the elementary inequality that
\begin{align*}
\EE|X_{i+1}|^2&\leq \frac{2m_1\D t}{1-2M_1\D t}+\frac{1}{1-2M_1\D t}\EE|X_i|^2+\frac{M_2\D t}{1-2M_1\D t}\EE|X_i|^2\\
&\quad +\frac{m_2\D t}{1-2M_1\D t}+\frac{1}{1-2M_1\D t}\D t^{2/\gamma_0}\\
&\leq \frac{1+M_2\D t}{1-2M_1\D t}\EE|X_i|^2+\frac{(2m_1+m_2+1)\D t}{1-2M_1\D t}\\
&= Q_1\EE|X_i|^2+Q_2.
\end{align*}
%where we used the fact that $\D>\D^{2/\gamma_0}$.
Since $M_2+2M_1<0$,
%Since $M_1<0$, choose $\D^*\in(0,1)$ sufficiently small such that for any $\D\in(0,\D^*)$
%\begin{equation*}
%\frac{3(1+M_2\D)}{1-2M_1\D}\in(0,1).
%\end{equation*}
%\par
%We set $\de_1=-(3M_2+2M_1)$ such that $\de_1\D>2$, then, we may ensure that
\begin{equation*}
Q_1=\frac{1+M_2\D t}{1-2M_1\D t}<1
\end{equation*}
for any $\D t\in(0,1)$.
Thus
\begin{equation*}
\EE|X_i|^2\leq Q_1^i\EE|X_0|^2+\frac{Q_2(1-Q_1^i)}{1-Q_1}.
\end{equation*}
This completes the proof.
\end{proof}
Let $\{X_i^x\}_{i\geq 0}$ and $\{X_i^y\}_{i\geq 0}$ be the numerical solutions with respect to  two different initial values  $x$ and $y$.
\begin{lemma}\label{Y2}
Given Assumptions \ref{sideLip} and \ref{lineargro}. For any $\D t\in(0,1)$, the numerical solutions satisfy
\begin{equation*}
\lim_{i\rightarrow \infty}\EE|X_i^x-X_i^y|^2=0.
\end{equation*}
\end{lemma}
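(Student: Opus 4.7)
The plan is to derive a one-step contraction estimate for $\mathbb{E}|X_{i+1}^x - X_{i+1}^y|^2$ in terms of $\mathbb{E}|X_i^x - X_i^y|^2$ and then iterate. The crucial observation is that since the L\'evy process enters \eqref{autonomonsNum} additively and is driver-independent of the initial condition, the increments $\Delta L_{i+1}$ cancel in the difference of two trajectories. Subtracting the recursions for $X^x$ and $X^y$ gives
\begin{equation*}
(X_{i+1}^x - X_{i+1}^y) - \Delta t\bigl(f(X_{i+1}^x)-f(X_{i+1}^y)\bigr) = (X_i^x - X_i^y) + \bigl(g(X_i^x)-g(X_i^y)\bigr)\Delta B_{i+1}.
\end{equation*}

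The key step is to take the squared Euclidean norm of both sides. On the left I expand and use Assumption \ref{sideLip} to bound the cross term
\[
-2\Delta t\,(X_{i+1}^x - X_{i+1}^y)^{\mathrm T}\bigl(f(X_{i+1}^x)-f(X_{i+1}^y)\bigr) \ge -2K_3\Delta t\,|X_{i+1}^x-X_{i+1}^y|^2,
\]
while discarding the nonnegative $\Delta t^2|f(X_{i+1}^x)-f(X_{i+1}^y)|^2$ term. On the right, since $X_i^x$ and $X_i^y$ are $\mathcal F_{t_i}$-measurable and $\Delta B_{i+1}$ is independent of $\mathcal F_{t_i}$ with $\mathbb E[\Delta B_{i+1}]=0$ and $\mathbb E[|\Delta B_{i+1}|^2]=\Delta t$, the cross term vanishes after taking expectations, and Assumption \ref{lineargro} controls the remaining piece by $K_4\Delta t\,\mathbb E|X_i^x-X_i^y|^2$. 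Putting these bounds together yields
\[
(1-2K_3\Delta t)\,\mathbb E|X_{i+1}^x-X_{i+1}^y|^2 \le (1+K_4\Delta t)\,\mathbb E|X_i^x-X_i^y|^2.
\]

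Now the structural condition $K_4+2K_3<-1$ from Assumption \ref{lineargro} is used to show contraction. Indeed it gives $1+K_4\Delta t < 1-2K_3\Delta t-\Delta t$, so the ratio
\[
\lambda := \frac{1+K_4\Delta t}{1-2K_3\Delta t}\in(0,1)
\]
for every $\Delta t\in(0,1)$ (note $K_3<-1/2$ ensures the denominator is positive and exceeds $1$). Iterating the contraction inequality $i$ times yields $\mathbb E|X_i^x-X_i^y|^2 \le \lambda^i|x-y|^2$, and sending $i\to\infty$ gives the claim.

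The main obstacle, and really the only subtle point, is the handling of the implicit right-hand side in the squaring step: one must avoid bounding $|X_{i+1}^x-X_{i+1}^y|^2$ by something that still contains implicit terms on the right-hand side. The trick is to move the implicit increment to the left-hand side before squaring so that the remaining right-hand side is $\mathcal F_{t_i}$-measurable up to the independent Brownian increment, which is what makes the martingale cross term vanish cleanly under expectation. Everything else is then a routine application of the one-sided Lipschitz and Lipschitz conditions and the discrete Gronwall-type iteration.
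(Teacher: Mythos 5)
Your proof is correct and takes essentially the same route as the paper: both derive the one-step contraction $\EE|X_{i+1}^x-X_{i+1}^y|^2 \le \frac{1+K_4\D t}{1-2K_3\D t}\,\EE|X_i^x-X_i^y|^2$ by exploiting the cancellation of the additive L\'evy increments, the one-sided Lipschitz condition on $f$, the Lipschitz condition on $g$, and the independence and zero mean of $\D B_{i+1}$, and then iterate using $K_4+2K_3<-1$ to conclude. The only cosmetic difference is that the paper multiplies the difference recursion by $(X_{i+1}^x-X_{i+1}^y)^{\mathrm{T}}$ and applies Young's inequality to the explicit part, while you move the implicit drift to the left and square, discarding the nonnegative $\D t^2|f(X_{i+1}^x)-f(X_{i+1}^y)|^2$ term; both yield the same contraction factor $Q_3=\frac{1+K_4\D t}{1-2K_3\D t}$.
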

\begin{proof}
Note that
\begin{equation*}
X_{i+1}^x-X_{i+1}^y=X_i^x-X_i^y + (f(X_{i+1}^x)-f(X_{i+1}^y))\D t + (g(X_i^x)-g(X_i^y))\D B_{i+1}.
\end{equation*}
Multiplying both sides with the transpose of $X_{i+1}^x-X_{i+1}^y$ yields
\begin{align*}
|X_{i+1}^x-X_{i+1}^y|^2&=(X_{i+1}^x-X_{i+1}^y)^{\mathrm{T}}(f(X_{i+1}^x)-f(X_{i+1}^y))\D t \\
&\quad +(X_{i+1}^x-X_{i+1}^y)^{\mathrm{T}}\left(X_i^x-X_i^y + (g(X_i^x)-g(X_i^y))\D B_{i+1}\right).
\end{align*}
By Assumption \ref{sideLip} and $ab\leq (a^2+b^2)/2$, we have
\begin{equation*}
|X_{i+1}^x-X_{i+1}^y|^2 \leq (\frac{1}{2}+K_3\D t)|X_i^x-X_i^y|^2+\frac{1}{2}|X_i^x-X_i^y + (g(X_i^x)-g(X_i^y))\D B_{i+1}|^2.
\end{equation*}
Taking expectations on both sides above and using Assumption \ref{lineargro} result in
\begin{equation*}
\EE|X_{i+1}^x-X_{i+1}^y|^2\leq \frac{1+K_4\D t}{1-2K_3\D t}\EE|X_i^x-X_i^y|^2.
\end{equation*}
%\par
Due to  $K_4+2K_3<-1$,
\begin{equation*}
\frac{1+K_4\D t}{1-2K_3\D t}<1
\end{equation*}
holds for any $\D t\in(0, 1)$. Therefore,
\begin{equation*}
\EE|X_i^x-X_i^y|^2\leq Q_3^i\EE|X_0^x-X_0^y|^2.
\end{equation*}
where $Q_3=\frac{1+K_4\D t}{1-2K_3\D t}$. This completes the proof.
\end{proof}
We now present the existence and uniqueness  of the invariant measure of the numerical solution $\{X_i\}_{i\geq 0}$.
\begin{theorem}\label{Invariant}
Suppose that Assumptions \ref{sideLip} and \ref{lineargro} hold. Then for any fixed $\D t\in(0,1)$, the numerical solution $\{X_i\}_{i\geq 0}$ has a unique invariant measure $\Pi_{\D t}$.
\end{theorem}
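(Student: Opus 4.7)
The plan is to use the standard coupling/fixed-point argument for time-homogeneous Markov chains, built directly on the two lemmas already established. Since $f,g$ in \eqref{SDE3} are autonomous, the scheme \eqref{autonomonsNum} defines a time-homogeneous Markov chain on $\RR^d$ with one-step kernel $\mathbb{P}_1$, and it suffices to produce a probability measure $\Pi_{\D t}$ with $\Pi_{\D t}=\Pi_{\D t}\mathbb{P}_1$. Fix an arbitrary initial point $x\in\RR^d$ and set $\mu_i:=\delta_x\mathbb{P}_i$, the law of $X_i^x$. By Lemma \ref{Xiyizhi}, $\sup_{i\geq 0}\EE|X_i^x|^2<\infty$, so Chebyshev's inequality gives tightness of $\{\mu_i\}_{i\geq 0}$ and, more importantly, uniform integrability of $|y|^2$ under these laws.

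Next I would upgrade tightness to a Cauchy property in the Wasserstein metric $W_k$ for $k=1$. For $j\geq i$, using the Markov property $\mu_j=\int \delta_y\mathbb{P}_i(\cdot)\,\mu_{j-i}(dy)$ together with the convexity of $W_1$ gives
\begin{equation*}
W_1(\mu_i,\mu_j)\leq \int_{\RR^d} W_1(\delta_x\mathbb{P}_i,\delta_y\mathbb{P}_i)\,\mu_{j-i}(dy).
\end{equation*}
By Jensen and Lemma \ref{Y2},
\begin{equation*}
W_1(\delta_x\mathbb{P}_i,\delta_y\mathbb{P}_i)\leq \bigl(\EE|X_i^x-X_i^y|^2\bigr)^{1/2}\leq Q_3^{i/2}|x-y|,
\end{equation*}
and combining this with the uniform $L^2$ bound on $\mu_{j-i}$ yields $W_1(\mu_i,\mu_j)\leq C\,Q_3^{i/2}$ with $C$ independent of $j$. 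Hence $\{\mu_i\}$ is Cauchy in $W_1$; by completeness of $(\mathcal{P}_1(\RR^d),W_1)$ it has a limit $\Pi_{\D t}\in\mathcal{P}(\RR^d)$, which inherits the second-moment bound of Lemma \ref{Xiyizhi} by lower semicontinuity.

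To verify invariance, apply the same contraction estimate to $W_1(\mu_i\mathbb{P}_1,\Pi_{\D t}\mathbb{P}_1)\leq W_1(\mu_i,\Pi_{\D t})\cdot Q_3^{1/2}$ (a one-step version of Lemma \ref{Y2}) and pass to the limit in the recursion $\mu_{i+1}=\mu_i\mathbb{P}_1$ to obtain $\Pi_{\D t}=\Pi_{\D t}\mathbb{P}_1$, which inductively gives $\Pi_{\D t}\mathbb{P}_i=\Pi_{\D t}$ for all $i$. For uniqueness, if $\pi_1,\pi_2$ are both invariant with finite second moment (automatic for any invariant law on this chain by Lemma \ref{Xiyizhi}), then for every $i$,
\begin{equation*}
W_1(\pi_1,\pi_2)=W_1(\pi_1\mathbb{P}_i,\pi_2\mathbb{P}_i)\leq \int\!\!\int W_1(\delta_x\mathbb{P}_i,\delta_y\mathbb{P}_i)\,\pi_1(dx)\pi_2(dy)\leq Q_3^{i/2}\!\!\int\!\!\int|x-y|\,\pi_1(dx)\pi_2(dy),
\end{equation*}
and the right-hand side vanishes as $i\to\infty$, forcing $\pi_1=\pi_2$.

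The main obstacle is not the contraction itself (that is handed to us by Lemma \ref{Y2}) but the bookkeeping around the Wasserstein distance: one must pick a metric compatible with both the $L^2$ contraction of Lemma \ref{Y2} and the $L^2$ moment bound of Lemma \ref{Xiyizhi}, ensure that the Wasserstein space used is complete, and justify the convexity/Markov-property integration against $\mu_{j-i}$. Taking $k=1$ and working in $\mathcal{P}_1(\RR^d)$ resolves all of these issues cleanly, and no further structural input beyond Lemmas \ref{Xiyizhi} and \ref{Y2} is required.
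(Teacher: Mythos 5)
Your proof is correct in substance, but the existence half follows a genuinely different route from the paper. The paper obtains existence via the Krylov--Bogoliubov device: it forms the averaged measures $\omega_n(B)=\frac1n\sum_{i=1}^n\PP(X_i\in B)$, uses the uniform second-moment bound of Lemma \ref{Xiyizhi} with Chebyshev to get tightness, and extracts a weakly convergent subsequence whose limit is invariant (citing \cite{BaoShaoYuan2016}); uniqueness is then proved exactly as you do, by the convexity of the Wasserstein distance plus the contraction of Lemma \ref{Y2}. You instead run a Banach-type fixed-point argument: combining the pointwise contraction $W_1(\delta_x\mathbb{P}_i,\delta_y\mathbb{P}_i)\leq Q_3^{i/2}|x-y|$ with the uniform moment bound to show $\{\delta_x\mathbb{P}_i\}_i$ is Cauchy in $W_1$, invoking completeness of $(\mathcal{P}_1,W_1)$, and passing to the limit in the one-step recursion. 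Your route avoids the tightness/subsequence step and the external reference, and it yields more: geometric-rate convergence of $\delta_x\mathbb{P}_i$ to $\Pi_{\D t}$, which is exactly the ingredient Theorem \ref{conv} later needs. The paper's route is softer (no completeness of a Wasserstein space, no Cauchy bookkeeping) and only needs tightness for existence, with the contraction reserved for uniqueness.

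Two small points you should tighten. First, the ``one-step version of Lemma \ref{Y2}'' in the form $W_1(\mu\mathbb{P}_1,\nu\mathbb{P}_1)\leq Q_3^{1/2}W_1(\mu,\nu)$ does not follow directly from the $L^2$ estimate applied to an optimal $W_1$-coupling (Jensen points the wrong way); it does follow, as in your Cauchy step, by first proving $W_1(\delta_x\mathbb{P}_1,\delta_y\mathbb{P}_1)\leq Q_3^{1/2}|x-y|$ for deterministic starting points and then integrating against an optimal coupling of $\mu$ and $\nu$ using convexity of $W_1$ --- say this explicitly (or work in $W_2$ throughout, where the $L^2$ contraction transfers immediately). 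Second, the assertion that every invariant measure automatically has a finite first (or second) moment ``by Lemma \ref{Xiyizhi}'' needs a one-line justification, e.g.\ by stationarity: for $\pi$ invariant, $\int\min(|y|^2,R)\,\pi(dy)=\int\EE\min(|X_i^x|^2,R)\,\pi(dx)\leq\int\min\bigl(Q_1^i|x|^2+\tfrac{Q_2}{1-Q_1},R\bigr)\pi(dx)$, and letting $i\to\infty$ then $R\to\infty$ gives the uniform moment bound; the paper's own uniqueness argument quietly relies on the same kind of integrability, so this is a gap in presentation rather than in the idea.
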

\begin{proof}
For each integer $n\geq 1$ and any Borel set $B\subset \RR^d$, define the measure
\begin{equation*}
\omega_n(B)={\frac{1}{n}\sum_{i=1}^{n}\mathbb{P}(X_i\in B)}.
\end{equation*}
It follows from Lemma \ref{Xiyizhi} and the Chebyshev inequality that the measure sequence $\{\omega_n(B)\}_{n\geq 1}$ is tight such that there exists a subsequence which converges to an invariant measure \cite{BaoShaoYuan2016}. This proves the existence of the invariant measure of the numerical solution. In the following, we show  the uniqueness of the invariant measure.
Let $\Pi_{\D t}^{x}$ and $\Pi_{\D t}^{y}$ be invariant measures of $\{X_i^x\}_{i\geq 0}$ and $\{X_i^y\}_{i\geq 0}$, respectively. Then
\begin{align*}
W_k(\Pi_{\D t}^x,\Pi_{\D t}^y)&= W_k(\Pi_{\D t}^x\mathbb{P}_i,\Pi_{\D t}^y\mathbb{P}_i)\\
&\leq \int_{\RR^d}\int_{\RR^d}\Pi_{\D t}^x(dx)\Pi_{\D t}^y(dy)W_k(\delta_x\mathbb{P}_i,\delta_y\mathbb{P}_i).
\end{align*}
From Lemma \ref{Y2}, we get
\begin{equation*}
W_k(\delta_x\mathbb{P}_i,\delta_y\mathbb{P}_i)\leq (Q_3^i\EE|x-y|^2)^{\frac{k}{2}} \rightarrow 0, \quad \text{as} \quad i\rightarrow \infty.
\end{equation*}
Then, we have
\begin{equation*}
\lim_{i\rightarrow \infty}W_k(\Pi_{\D t}^x,\Pi_{\D t}^y)=0,
\end{equation*}
which completes the proof.
\end{proof}
The following theorem states the numerical invariant measure $\Pi_{\D t}$ converges to the underlying one $\pi$ in the Wassertein distance.
\begin{theorem}\label{conv}
Suppose that Assumptions \ref{sideLip} and \ref{lineargro} hold. Then %for any given $\D t\in(0,\D t^{**})$, we have
\begin{equation*}
\lim_{\D t\rightarrow 0+}W_k(\pi,\Pi_{\D t})=0.
\end{equation*}
%Furthermore, there exists a constant $\hat{C}$ such that
%\begin{equation*}
%W_k(\pi,\Pi_{\D})\leq \hat{C}\D^{}.
%\end{equation*}
\end{theorem}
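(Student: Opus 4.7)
The plan is to bound $W_k(\pi,\Pi_{\D t})$ by a three-piece triangle inequality. Fix an initial point $x_0\in\RR^d$ and, for a parameter $T>0$ to be chosen later, put $n=\lfloor T/\D t\rfloor$. Then
\begin{equation*}
W_k(\pi,\Pi_{\D t})\le W_k(\pi,\delta_{x_0}\overline{\PP}_T)+W_k(\delta_{x_0}\overline{\PP}_T,\delta_{x_0}\PP_n)+W_k(\delta_{x_0}\PP_n,\Pi_{\D t}).
\end{equation*}
The outer two terms are to be controlled by taking $T$ large, and the middle term by sending $\D t\to 0$ with $T$ held fixed.

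For the third term I would repeat the last step of the proof of Theorem \ref{Invariant}: $\Pi_{\D t}$-invariance, Lemma \ref{Y2}, and the uniform second-moment bound of Lemma \ref{Xiyizhi} (which by lower semicontinuity transfers to $\Pi_{\D t}$) yield
\begin{equation*}
W_k(\delta_{x_0}\PP_n,\Pi_{\D t})\le \int_{\RR^d}\bigl(Q_3^n\EE|x_0-y|^2\bigr)^{k/2}\Pi_{\D t}(dy)\le C\,Q_3^{nk/2}.
\end{equation*}
Because $Q_3=(1+K_4\D t)/(1-2K_3\D t)=1-c\,\D t+O(\D t^2)$ with $c:=-(K_4+2K_3)>1$, one has $Q_3^{\lfloor T/\D t\rfloor}\to e^{-cT}$ as $\D t\to 0$, so this term is made small, uniformly in small $\D t$, by choosing $T$ large. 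The first term is handled by the same kind of contraction at the continuous-time level: It\^o's formula applied to $|x^{x_0}(t)-x^y(t)|^2$ combined with Assumptions \ref{sideLip} and \ref{lineargro} gives $\EE|x^{x_0}(t)-x^y(t)|^2\le e^{(2K_3+K_4)t}|x_0-y|^2$, and the continuous-time analogue of Theorem \ref{Invariant} then provides existence and uniqueness of $\pi$ together with $W_k(\pi,\delta_{x_0}\overline{\PP}_T)\le Ce^{(2K_3+K_4)Tk/2}\to 0$ as $T\to\infty$. The middle term is controlled by Theorem \ref{Conver} applied to \eqref{SDE3}: using the synchronous coupling in which $x(\cdot)$ and $X_{\cdot}$ share the same driving $B$ and $L$, and Jensen's inequality (valid since $k/2\le 1/2$),
\begin{equation*}
W_k(\delta_{x_0}\overline{\PP}_T,\delta_{x_0}\PP_n)\le \bigl(\EE|x(T)-X_n|^2\bigr)^{k/2}\le C(T)\bigl(\D t^{2\gamma_1}+\D t^{2\gamma_2}+\D t\bigr)^{k/2},
\end{equation*}
which vanishes as $\D t\to 0$ for each fixed $T$.

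Given $\e>0$, I would first choose $T$ large enough that the first and third terms are each below $\e/3$, then pick $\D t_0$ small enough that the middle term is below $\e/3$ for every $\D t<\D t_0$. The main obstacle, and the only delicate point, is the order of limits: the constant $C(T)$ in Theorem \ref{Conver} grows exponentially in $T$, so one cannot let $T=T(\D t)\to\infty$ simultaneously with $\D t\to 0$; fixing $T$ first and shrinking $\D t$ afterwards is what makes the argument work. A minor technical point is that the existence of the underlying invariant measure $\pi$ and its exponential contractivity are not separately stated in the paper, but both follow from the dissipativity condition $K_4+2K_3<-1$ in exactly the same way as at the numerical level, so no further hypotheses are needed.
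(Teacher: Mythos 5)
Your proposal is correct and follows essentially the same route as the paper: the same three-term triangle inequality splitting $W_k(\pi,\Pi_{\D t})$ into the ergodic error of the exact semigroup, the finite-time strong error controlled by Theorem \ref{Conver}, and the convergence of $\delta_{x_0}\PP_n$ to $\Pi_{\D t}$ via the contraction of Lemma \ref{Y2}. In fact your treatment is slightly more careful than the paper's on the order of quantifiers (fixing $T$ first, using that the contraction bounds are uniform in small $\D t$, and only then letting $\D t\to 0$) and in making explicit the continuous-time contractivity and existence of $\pi$ that the paper uses implicitly.
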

\begin{proof}
For any $k\in(0,1]$,
\begin{equation*}
W_k(\delta_x\overline{\mathbb{P}}_{i\D t},\pi)\leq \int_{\RR^d}\pi(dy)W_k(\delta_x\overline{\mathbb{P}}_{i\D t},\delta_y\overline{\mathbb{P}}_{i\D t})
\end{equation*}
and
\begin{equation*}
W_k(\delta_x\mathbb{P}_{i},\Pi_{\D t})\leq \int_{\RR^d}\Pi_{\D t}(dy)W_k(\delta_x\mathbb{P}_{i},\delta_y\mathbb{P}_{i}).
\end{equation*}
Because of the existence and uniqueness of the invariant measure for the numerical method \eqref{autonomonsNum} and Theorem \ref{Invariant}, for any $\D t\in(0,1)$ and $\e>0$, there exists a $i>0$ sufficiently large such that
\begin{equation*}
W_k(\delta_x\overline{\mathbb{P}}_{i\D t},\pi)\leq \frac{\e}{3} \quad \text{and} \quad W_k(\delta_x\mathbb{P}_{i},\Pi_{\D t})\leq \frac{\e}{3}.
\end{equation*}
Then, for the chosen $i$, by Theorem \ref{Conver}, we get
\begin{equation*}
W_k(\delta_x\overline{\mathbb{P}}_{i\D t},\delta_x\mathbb{P}_{i})\leq \frac{\e}{3}.
\end{equation*}
Therefore,
\begin{equation*}
W_k(\pi, \Pi_{\D t})\leq W_k(\d_x\overline{\mathbb{P}}_{i\D t},\pi)+W_k(\delta_x\mathbb{P}_{i},\Pi_{\D t})+W_k(\d_x\overline{\mathbb{P}}_{i\D t},\delta_x\mathbb{P}_{i})\leq \e,
\end{equation*}
which completes the proof.
\end{proof}

\begin{rmk}
%Compared with the existing conclusions in the literature,
Our conclusions reveal the existence and uniqueness of numerical invariant measure in the sense of k-Wasserstein distance and convergence of the numerical invariant measure to the underlying counterpart for any $k\in(0,1]$.
\end{rmk}

\section{Numerical examples}\label{Numex}
In this section, we conduct some numerical experiments to verify our theoretical results. Examples \ref{ex_conver} and \ref{ex_conver2} are used to illustrate the results of Theorem \ref{Conver} and Corollary \ref{Conver2}, respectively. Examples \ref{ex_stable} and \ref{ex_stable2} is given to demonstrates the results of  Theorems \ref{Invariant} and  \ref{conv}.
\par
It is should be noted that the L\'evy process we used in our numerical experiments is the tempered stable process, whose generating algorithm is borrowed from \cite{ErnestJum2015}.

\begin{expl}\label{ex_conver}
Consider the following SDE
\begin{equation*}
d y(t)=\left([(t-1)(2-t)]^{\frac{1}{5}}y^2(t)-2y^5(t)\right)dt+\left(2[(t-1)(2-t)]^{\frac{2}{5}}y(t)\right)d B(t)+d L(t),
\end{equation*}
where $t\in (0,1]$,  $y(0)=1$.
\end{expl}
For any $x,y\in\RR$,
\begin{align*}
|f(t,x)-f(t,y)|^2&=\left|[(t-1)(2-t)]^{\frac{1}{5}}(x^2-y^2)-2(x^5-y^5)\right|^2\\
&\leq \left|[(t-1)(2-t)]^{\frac{1}{5}}(x+y)-(x^4+y^4)\right|^2|x-y|^2\\
&\leq H(1+|x|^8+|y|^8)|x-y|^2,
\end{align*}
%where the first inequality sign above is used in the following estimation,
%\begin{equation*}
%-2(x^3y+xy^3)=-2xy(x^2+y^2)\leq (x^2+y^2)^2=x^4+y^4+2x^2y^2.
%\end{equation*}
%The second inequality is due to the fact that polynomials with the negative coefficients for the highest order term can always be bounded from above.
which shows that Assumption \ref{superlinear} is satisfied with $\s=8$.
\par
Using the same approach, for any $t\in(0,1)$, we can prove that Assumptions \ref{Khasminskii}, \ref{sideLip} and \ref{lineargro} are satisfied. Assumption \ref{timeHolder}  also holds with $\gamma_1=1/5$ and $\gamma_2=2/5$. Thus, according to the Theorem \ref{Conver}, we obtain
\begin{equation*}
\EE|y(t)-Y(t)|^2\leq C\left(\D t^{2\gamma_1}+\D t^{2\gamma_2}+\D t\right).
\end{equation*}
Note that the parameter $\gamma_0$ does not affect the convergence order.\par

We conduct $1000$ independent trajectories with different step sizes %$10^{-2}$, $10^{-3}$, $10^{-4}$, $10^{-5}$ and $10^{-6}$,%
$2^{-9}$, $2^{-10}$, $2^{-11}$, $2^{-12}$ and $2^{-15}$, respectively. Since the true solution of the SDE is difficult to obtain, we take the numerical solution solved with the minimum step size $2^{-15}$ as a reference solution.  We choose different $\gamma_0$ and record the errors versus the step sizes. Figures \ref{fig:1-1} and \ref{fig:1-2}  show that the convergence order is about $0.2$ when  $\gamma_0=1.3, 1.5$.  For $\gamma_0=1.3$, we take $f(t,y)=[(t-1)(2-t)]^{4/5}y^2(t)-2y^5(t)$, and thus $\gamma_1=0.8$. Figure \ref{fig:1-3} shows the convergence order is about $1/2$.
Numerical results confirm a close agreement between theoretical and numerical convergence order.

\begin{figure}[H]
\centering

\subfigure[{\label{fig:1-1}}$\gamma_0=1.3$, $\gamma_1=0.2$, $\gamma_2=0.4$]{
\begin{minipage}[t]{0.5\linewidth}
\centering
\includegraphics[width=2.9in]{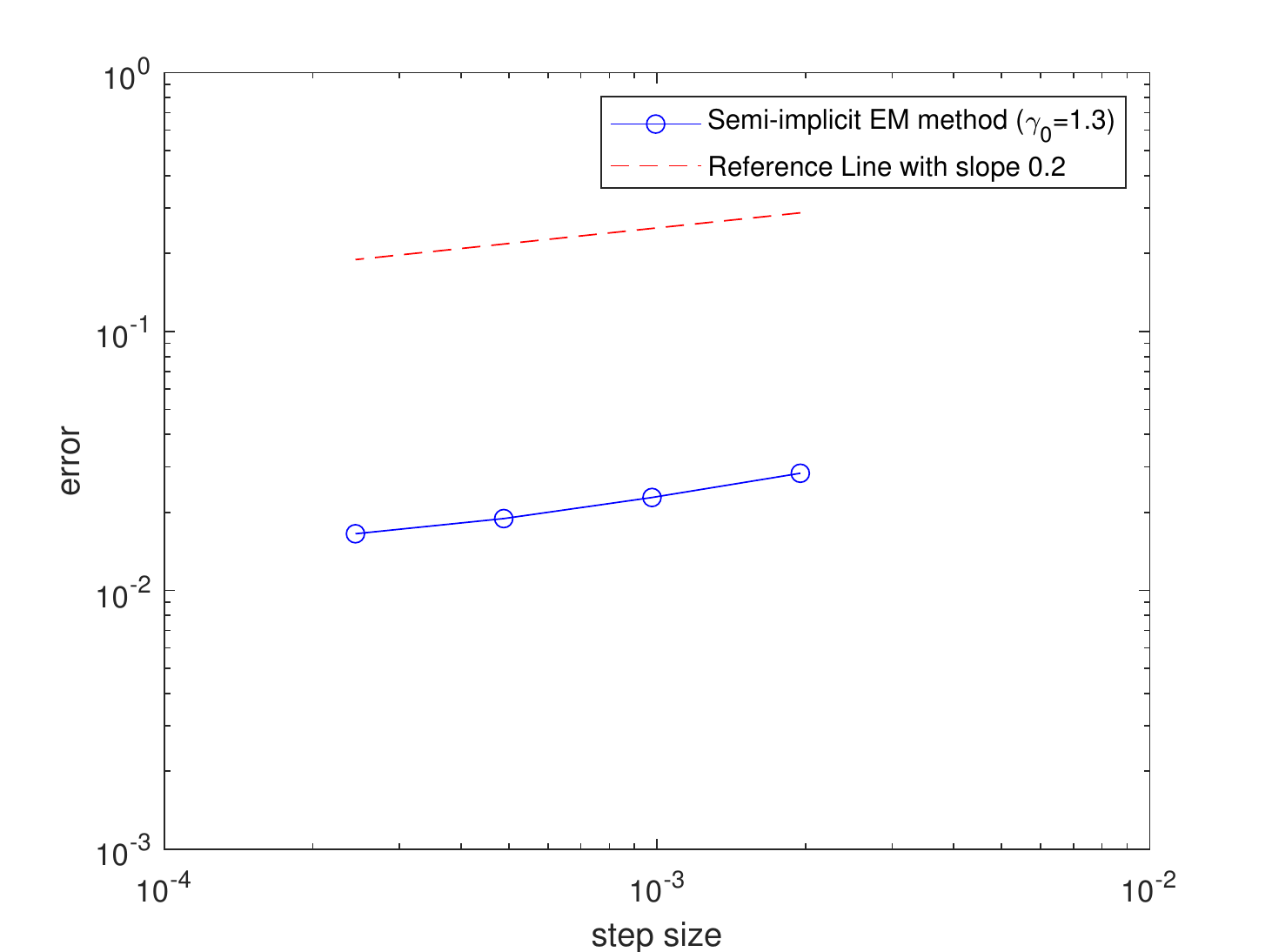}
%\caption{fig1}
\end{minipage}%
}%
\subfigure[{\label{fig:1-2}}$\gamma_0=1.5$, $\gamma_1=0.2$, $\gamma_2=0.4$]{
\begin{minipage}[t]{0.5\linewidth}
\centering
\includegraphics[width=2.9in]{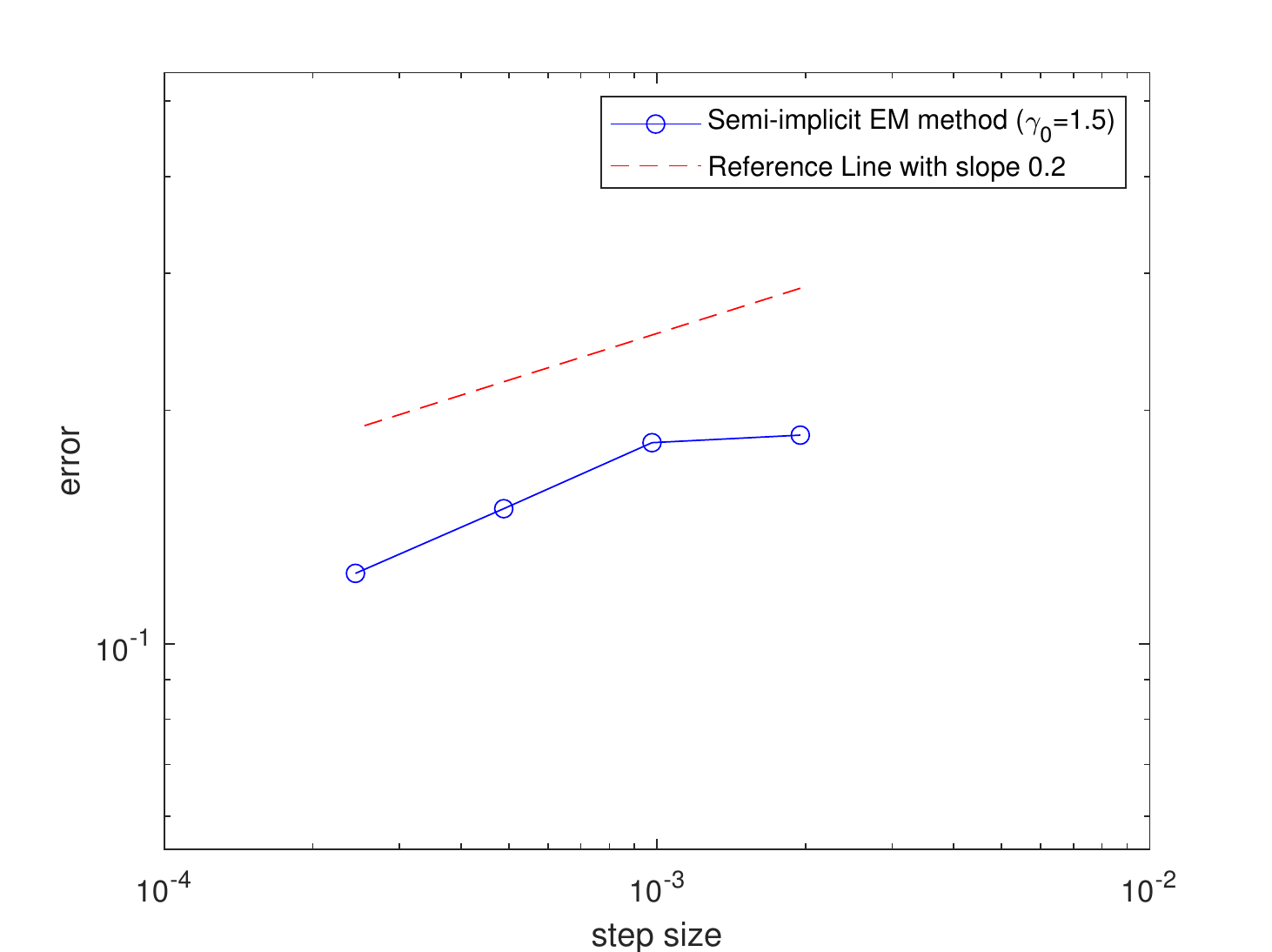}
%\caption{fig2}
\end{minipage}%
}%

                 %这个回车键很重要 \quad也可以
\subfigure[{\label{fig:1-3}}$\gamma_0=1.3$, $\gamma_1=0.8$, $\gamma_2=0.6$]{
\begin{minipage}[t]{0.5\linewidth}
\centering
\includegraphics[width=2.9in]{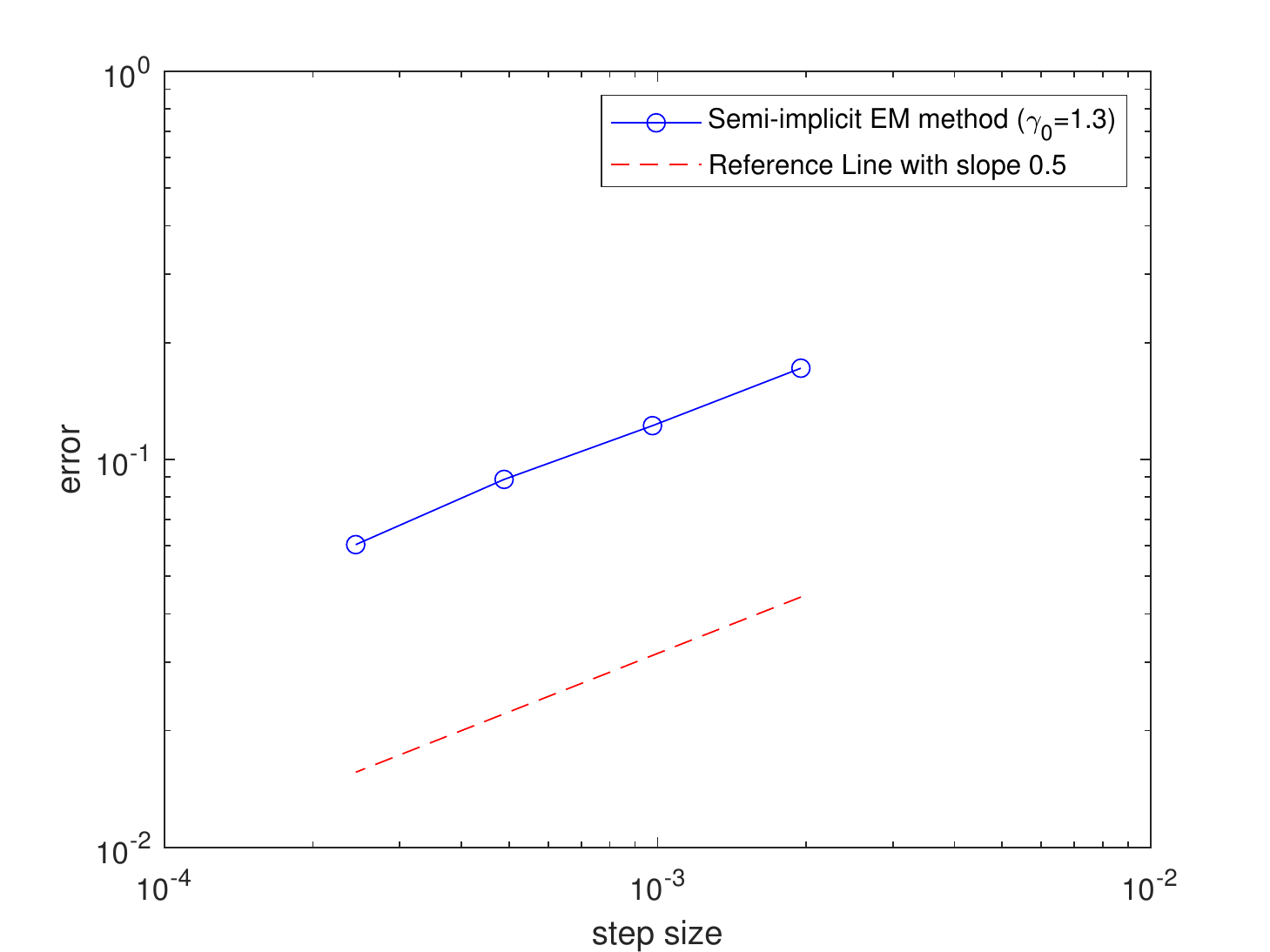}
%\caption{fig2}
\end{minipage}
}%

\centering
\caption{Errors versus step sizes}
\end{figure}

\begin{expl}\label{ex_conver2}
Consider the following SDE
\begin{equation*}
d u(t)=\left([(t-1)(2-t)]^{\gamma_1}u^2(t)-2u^5(t)\right)dt+d L(t),\quad t\in[0,1],
\end{equation*}
where $u(0)=1$.
\end{expl}
The Brownian motion noise doesn't appear in this example.
 According to Corollary \ref{Conver2},  the convergence order of the semi-implicit EM method is $\min\{\gamma_1,1/\gamma_0\}$.
%it can be known that the convergence order is the smaller value of the time variable H\"older continuous exponent $\gamma_1$ and L\'evy noise parameter $\gamma_0$.
For $\gamma_1=0.9$ and $\gamma_0=1.3$, we observe from Figure \ref{fig:2-1} that the convergence order is close to $1/\gamma_0\approx 0.77$, which agrees with Corollary \ref{Conver2}.
%According to Corollary \ref{Conver2}, we can get the order of convergence as $\min\{0.9, 1/1.2\}$.

\begin{figure}[H]
\centering
%% Use the relevant command to insert your figure file.
%% For example, with the graphicx package use
  \includegraphics[width=0.7\textwidth]{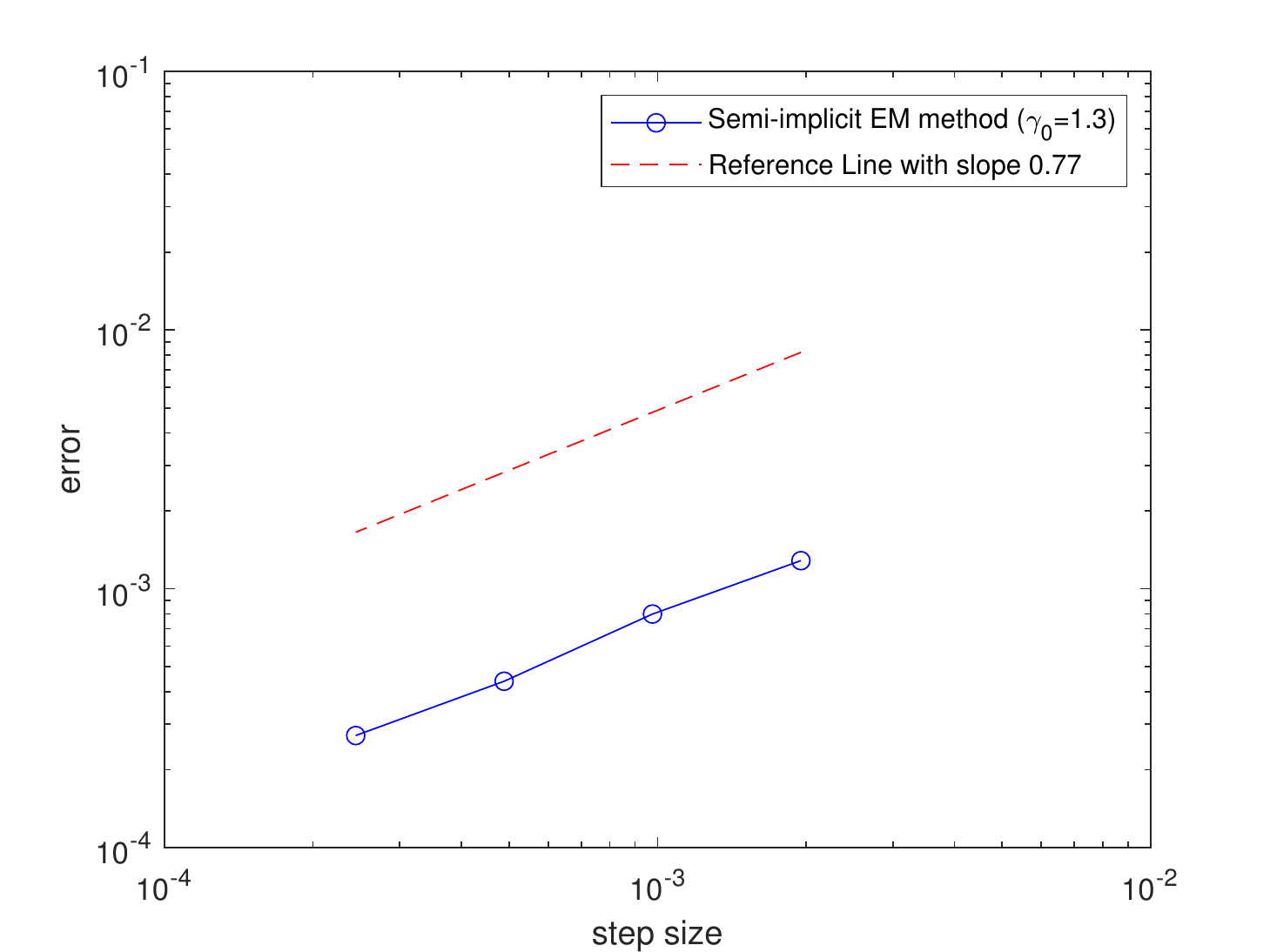}
%% figure caption is below the figure
\caption{Errors versus step sizes}
\label{fig:2-1}       % Give a unique label
\end{figure}

\begin{expl}\label{ex_stable}
Consider the following Ornstein-Uhlenbeck (OU) process
\begin{equation*}
dx(t)=-2x(t)dt+2L(t), ~~ x(0)=10.
\end{equation*}
\end{expl}
The Kolmogorov-Smirnov test \cite{Massey1951} helps us measure the difference in distribution between the numerical and true solutions more clearly. %The distribution of numerical solutions  at  time $T=1000$ is regarded as the reference  distribution. 
When $L(t)$ is an $\a$-stable process ($\a\in(0,2)$), the SDE has an invariant measure, which is $\mathbb{S}\left(2\left(\frac{1}{2\a}\right)^{1/\a},0,0\right)$
\cite{ChengHuLong2020}. We choose $\a=1.5$ as the true distribution. We simulate 10000 paths generated by the semi-implicit EM method. The empirical distributions at $t=0.1$, $t=0.3$, $t=0.7$ and $t=2$ are plotted in Figure \ref{fig:3-1}. It can be seen that the empirical distributions get closer to the true distribution as time $t$ increases.  Figure \ref{fig:3-2} shows that the distribution difference between the numerical solution and the true solution decreases as time $t$ increases.

\begin{figure}[H]
\centering
\subfigure[{\label{fig:3-1}} Empirical density functions at different time points]
{\includegraphics[width=0.49\linewidth]{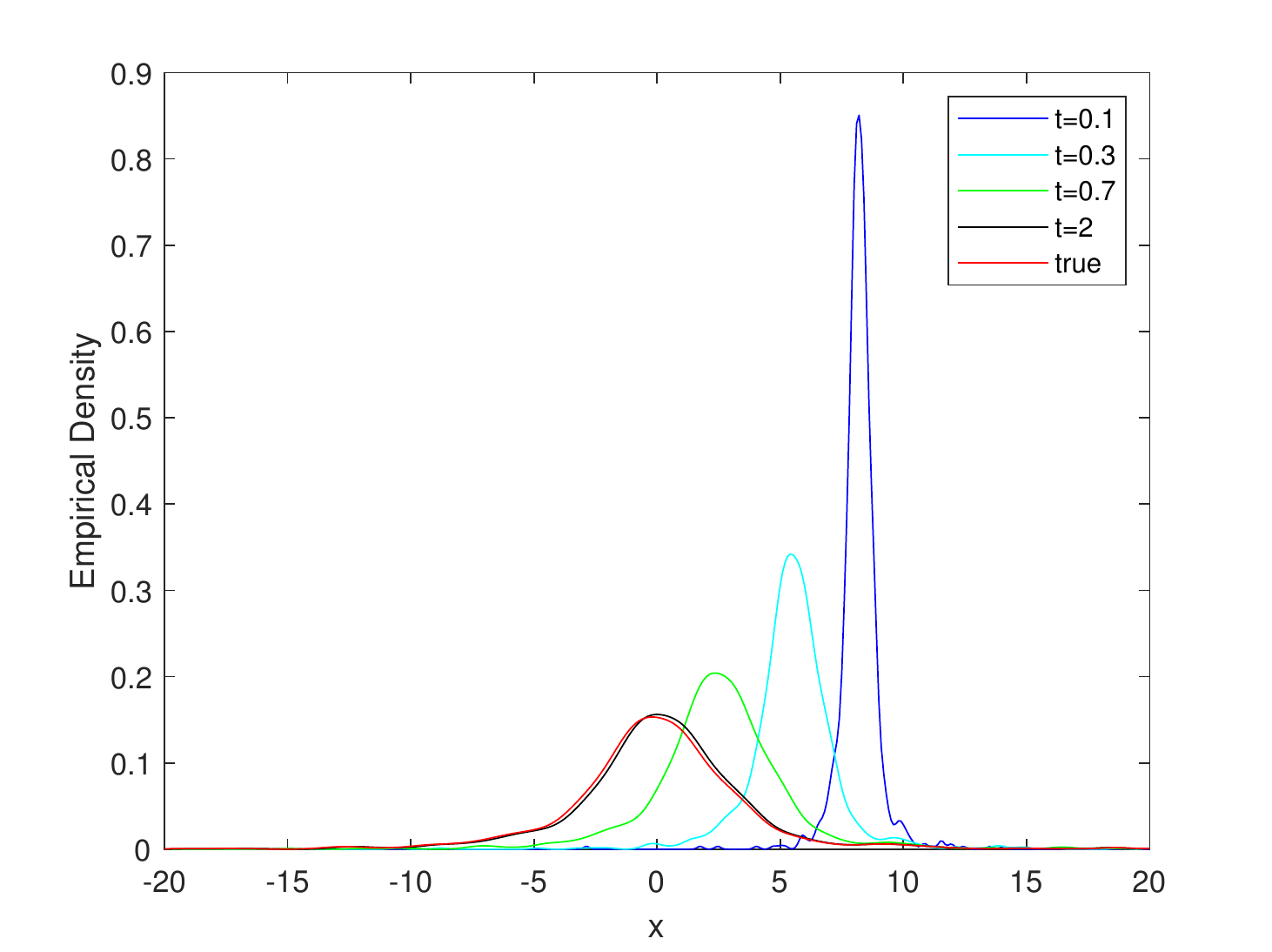} }
\subfigure[{\label{fig:3-2}} Differences between empirical distributions and the true stationary distribution]
{\includegraphics[width=0.49\linewidth]{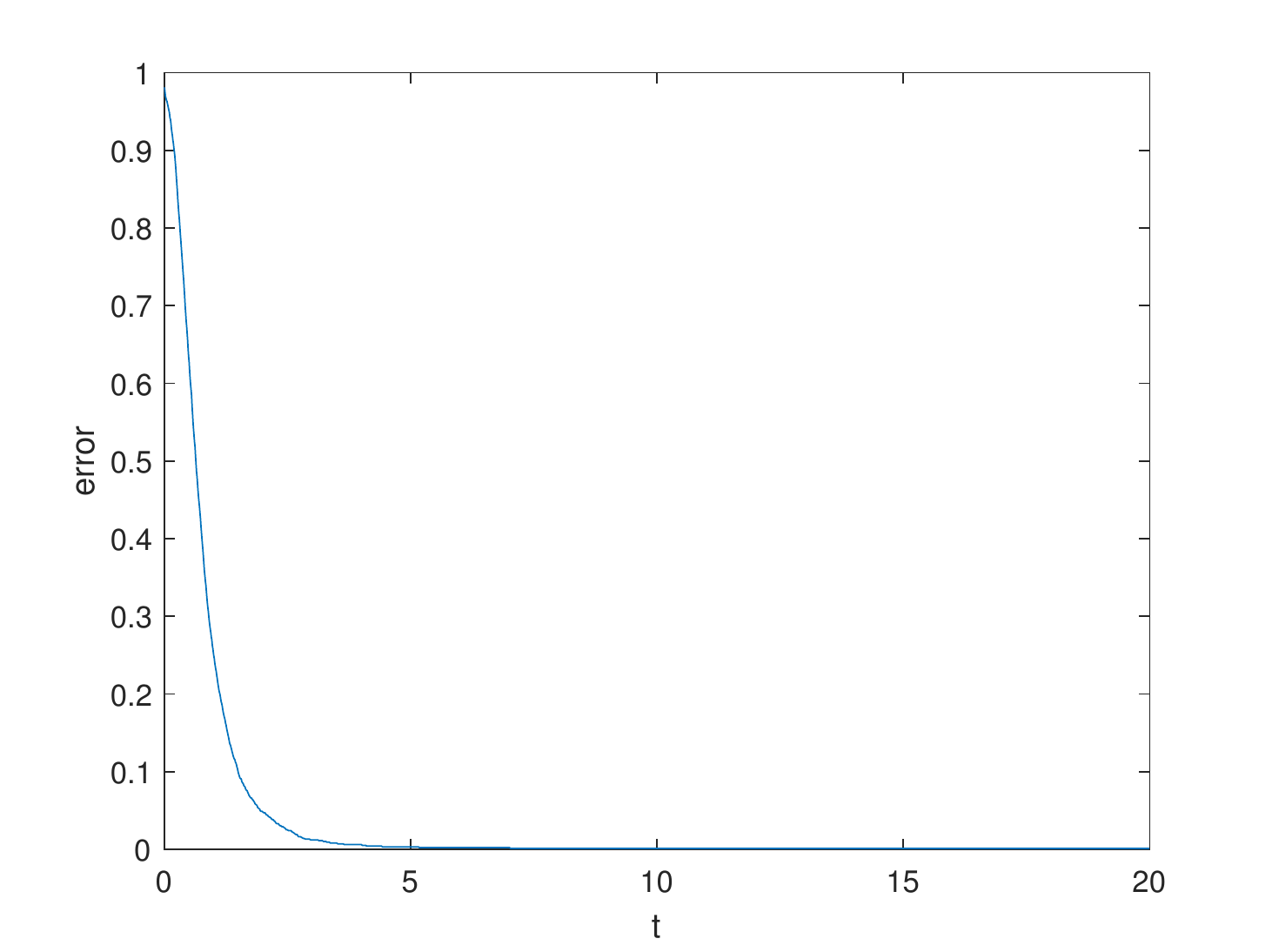}}
\caption{Empirical distributions and long time stability}
\end{figure}

\begin{expl}\label{ex_stable2}
Considering the following SDE
\begin{equation*}
dx(t)=\left(-x^3(t)-5x(t)+5\right)dt+\left(-x(t)+3\right)dB(t)+2dL(t),~~ x(0)=10.
\end{equation*}
\end{expl}

\begin{figure}[H]
\centering
\subfigure[{\label{fig:4-1}} Empirical density functions at different time points]
{\includegraphics[width=0.49\linewidth]{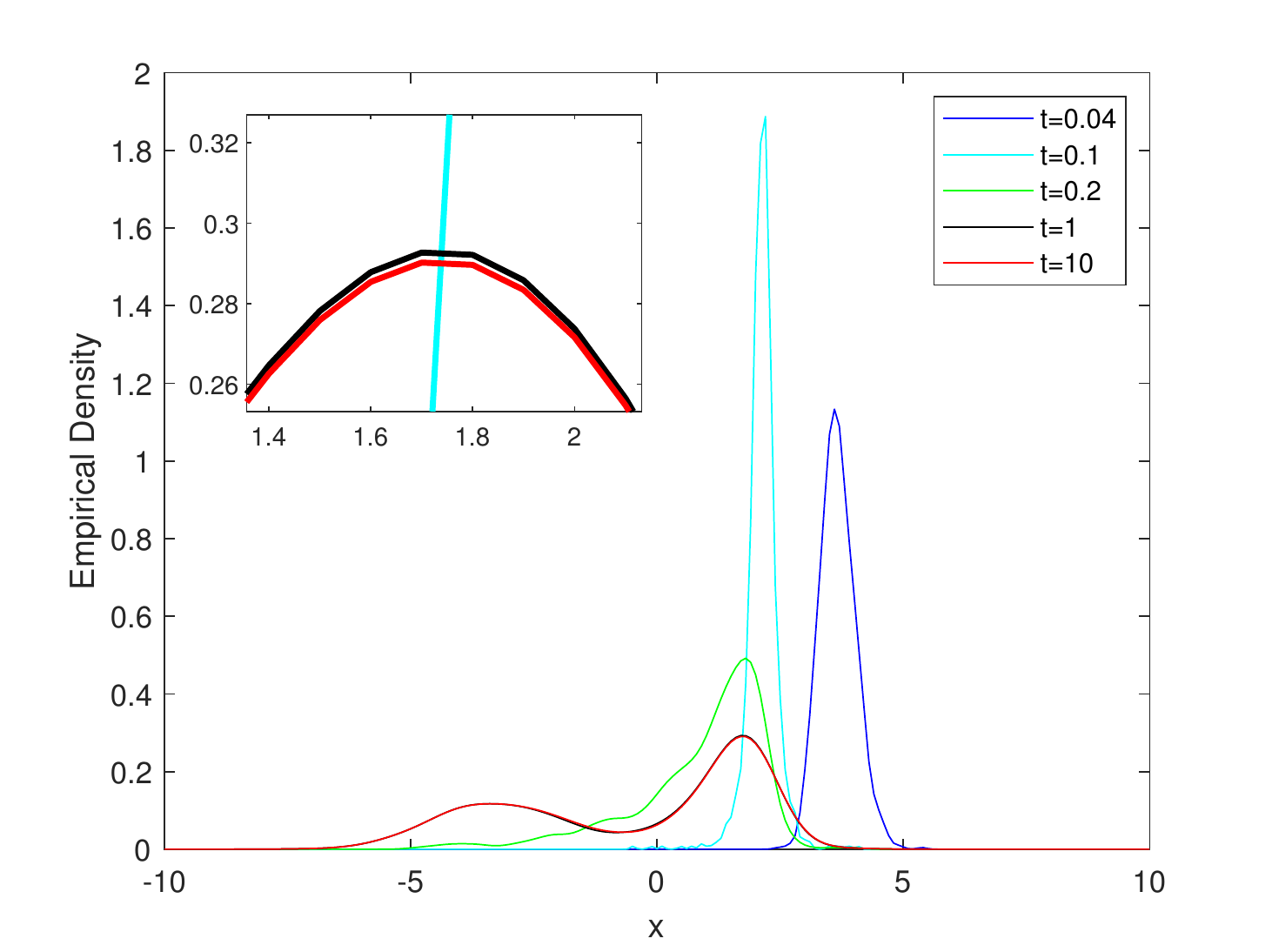} }
\subfigure[{\label{fig:4-2}} Differences between empirical distributions and the true stationary distribution]
{\includegraphics[width=0.49\linewidth]{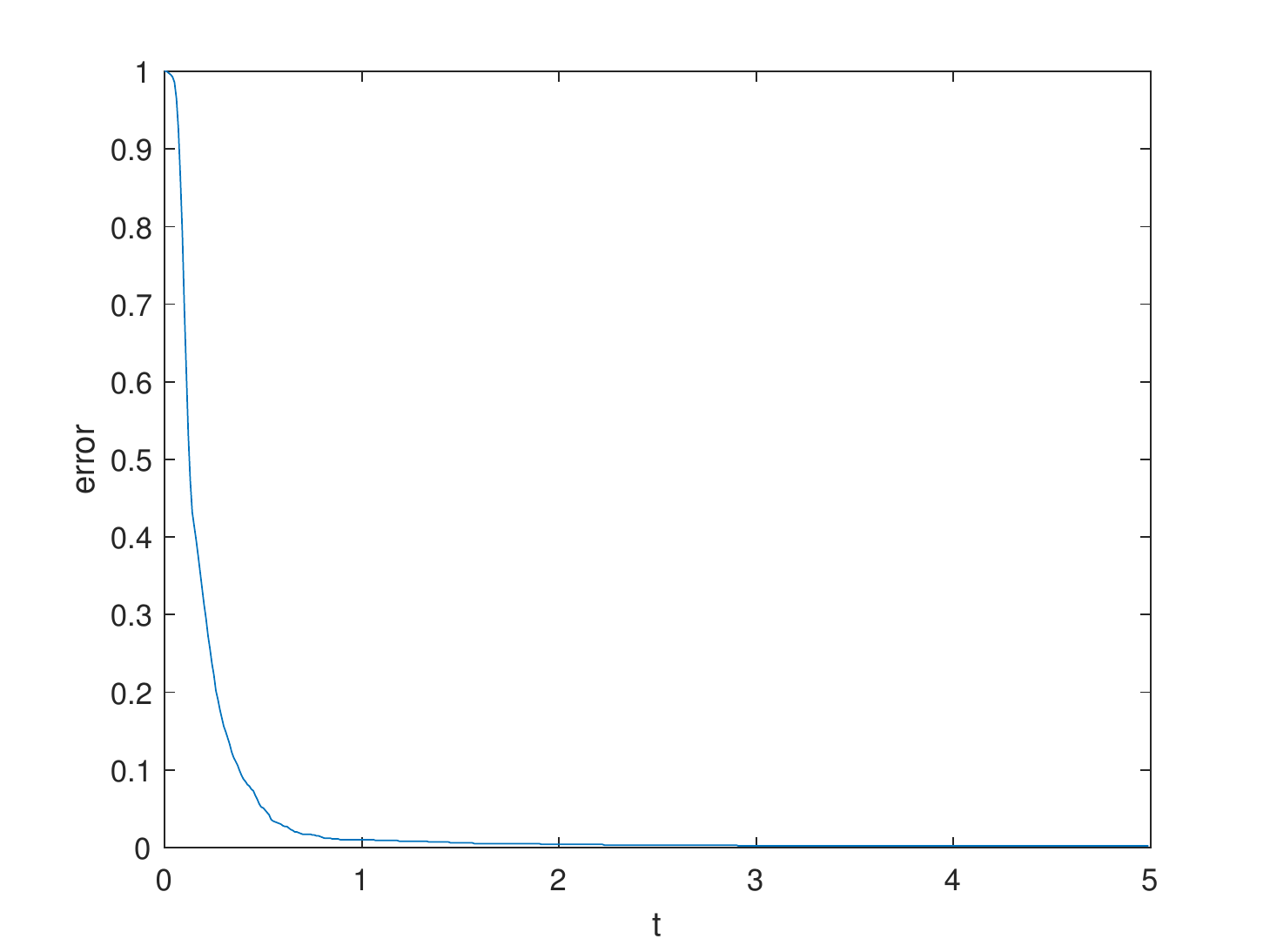}}
\caption{Empirical distributions and long time stability}
\end{figure}

We assume that step size $h=0.01$, $T=10$, and simulate 10000 paths. The empirical distributions at $t=0.04$, $t=0.1$, $t=0.2$, $t=1$ and $t=10$ are plotted in Figure \ref{fig:4-1}. It can be clearly seen that the empirical density functions with $t=0.04$, $t=0.1$ and $t=0.2$ have quite different shapes, but the shapes with $t=2$ and $t=10$ are very close. Figure \ref{fig:4-2} shows that the distribution difference between the numerical solution and the reference solution decreases as time $t$ increases, which supports our theoretical results as well.

\section{Conclusion and future research}\label{Con}
We investigate the finite time strong convergence of the semi-implicit EM method for SDEs with super-linear drift coefficient driven by a class of L\'evy processes. One of the key findings is that the convergence order is related to the parameter of the class of L\'evy process, which has not been observed in literatures. In addition, the semi-implicit EM method is capable of providing a good approximation of the invariant measure of the underlying SDEs.
\par
There are some technical difficulties still to be overcome  to deal with the multiplicative case of the class of L\'evy processes. Furthmore, other stable processes covered by the settings of the class of L\'evy processes in Section \ref{Pre} are worth to try in the future.

%\section*{Acknowledgments}
%The authors of this work are supported in part by the National Natural Science Foundation of China under Grant No. 12271368, the Science and Technology Innovation Plan of Shanghai under Grant No. 20JC1414200 and Shanghai Rising-Star Program under Grant No. 22QA1406900.

%\section*{Acknowledgments}
%The authors thank the anonymous reviewers for their valuable suggestions. This work is supported in part by funds from the National Science Foundation (NSF: \# 1636933 and \# 1920920).

%\section*{Funding}
%Funding information here.

%\bibliographystyle{plain}
%\bibliography{reference}

\end{document}